\newtheorem{remark}[theorem]{Remark}
\definecolor{Red}{rgb}{1,0,0}
\title{Validated Computation of Heteroclinic Sets}
\author{Maciej J. Capi\'nski \thanks{AGH University of Science and Technology, Faculty of Applied Mathematics, al. Mickiewicza 30, 30-059 Krak\'ow. Email: {\tt mcapinsk@agh.edu.pl}. The work was partially supported by the Polish National Science Center grant 2012/05/B/ST1/00355.
} 
\and 
J.D. Mireles James \thanks{Florida Atlantic University, Department of Mathematical Sciences, 777 Glades Rd, Boca Raton, FL 33431, United States. Email: {\tt jmirelesjames@fau.edu}. The work was partially supported by NSF grant DMS - 1318172.}}
\begin{document}
\maketitle
\begin{abstract}
In this work we develop a method for computing mathematically
rigorous enclosures of some one dimensional manifolds of
heteroclinic orbits for nonlinear maps. Our method exploits a 
rigorous curve following argument build on high order Taylor 
approximation of the local stable/unstable manifolds.  
The curve following argument is a uniform interval Newton method 
applied on short line segments.  The definition of the heteroclinic sets 
involve compositions of the map and we 
use a Lohner-type representation to overcome the accumulation of
roundoff errors.  Our argument requires precise control over the 
local unstable and stable manifolds so that we must 
first obtain validated a-posteriori error bounds on the truncation errors
associated with the manifold approximations.  We illustrate the utility
of our method by proving some computer assisted theorems about
heteroclinic invariant sets for a volume preserving map of $\mathbb{R}^3$. 
\end{abstract}

\begin{keywords}
 invariant manifolds, heteroclinic orbits, rigorous enclosure of level sets, computer assisted proof, validated numerics, parameterization method, Newton's method
\end{keywords}

\begin{AMS}
34C30,   %Manifolds of solutions
34C45,   %Invariant manifolds
37C05,   %Smooth mappings and diffeomorphisms 
37D10,   %(Dynamical systems) Invariant manifold theory
37M99, %Approximation methods and numerical treatment of dynamical systems
65G20, %Algorithms with automatic result verification
70K44. %Homoclinic and heteroclinic trajectories
% Mechanics of particles and systems
%37G05,    	%Normal forms
%70F07,    	%Three-body problems
%70F15,   	%Celestial mechanics
%70K45.    	%Normal forms 
%37N05,    	%Dynamical systems in classical and celestial mechanics
%34C20,    	%(ODE->Qualitative theory) Transformation and reduction of equations and systems, normal forms
\end{AMS}

\pagestyle{myheadings}
\thispagestyle{plain}
\markboth{M.J. CAPI\'NSKI AND J.D. MIRELES JAMES }{VALIDATED COMPUTATION OF HETEROCLINIC SETS}

%\tableofcontents

%TCIDATA{OutputFilter=latex2.dll}
%TCIDATA{Version=5.00.0.2606}
%TCIDATA{LaTeXparent=0,0,online-edit.tex}

\section{Introduction}
The study of a nonlinear dynamical system
begins by considering the fixed/periodic points, 
their linear stability, and their local stable and 
unstable manifolds. In order to patch this local information 
into a global picture of the dynamics one then wants to 
understand the connecting orbits. Questions about 
connecting orbits are naturally reformulated as
questions about where and how the stable and unstable 
manifolds intersect.

A typical situation is that we consider a hyperbolic fixed point, so that the
dimension of the stable manifold plus the dimension of the unstable manifold
add up to the dimension of the whole space. A transverse
intersection of the stable/unstable manifolds of a hyperbolic
fixed point is again a single point. Such
intersections give rise to homoclinic connecting orbits, and they are of
special interest. For example Smale's tangle theorem says that the existence
of a transverse homoclinic intersection point implies the existence of
hyperbolic chaotic motions \cite{MR0228014}.

More generally, consider a pair of distinct fixed points and assume
that the dimensions $s$ and $u$ of their stable and unstable manifolds
have $s+u=d>k$, with $k$ being the
dimension of the ambient space. In this case a transverse intersection of
the manifolds results in a set which is (locally) a $d-k$
dimensional manifold of connecting orbits. 
For example we are sometimes interested in transport
barriers or separatrices formed by codimension one stable or unstable
manifolds \cite{2015Chaos..25i7602M}.

\begin{figure}[t] 
\begin{center}
%\scalebox{0.425}{\includegraphics{heteroclinicPrimaryArcs_WsWuGlobal}}
\scalebox{0.175}{\includegraphics{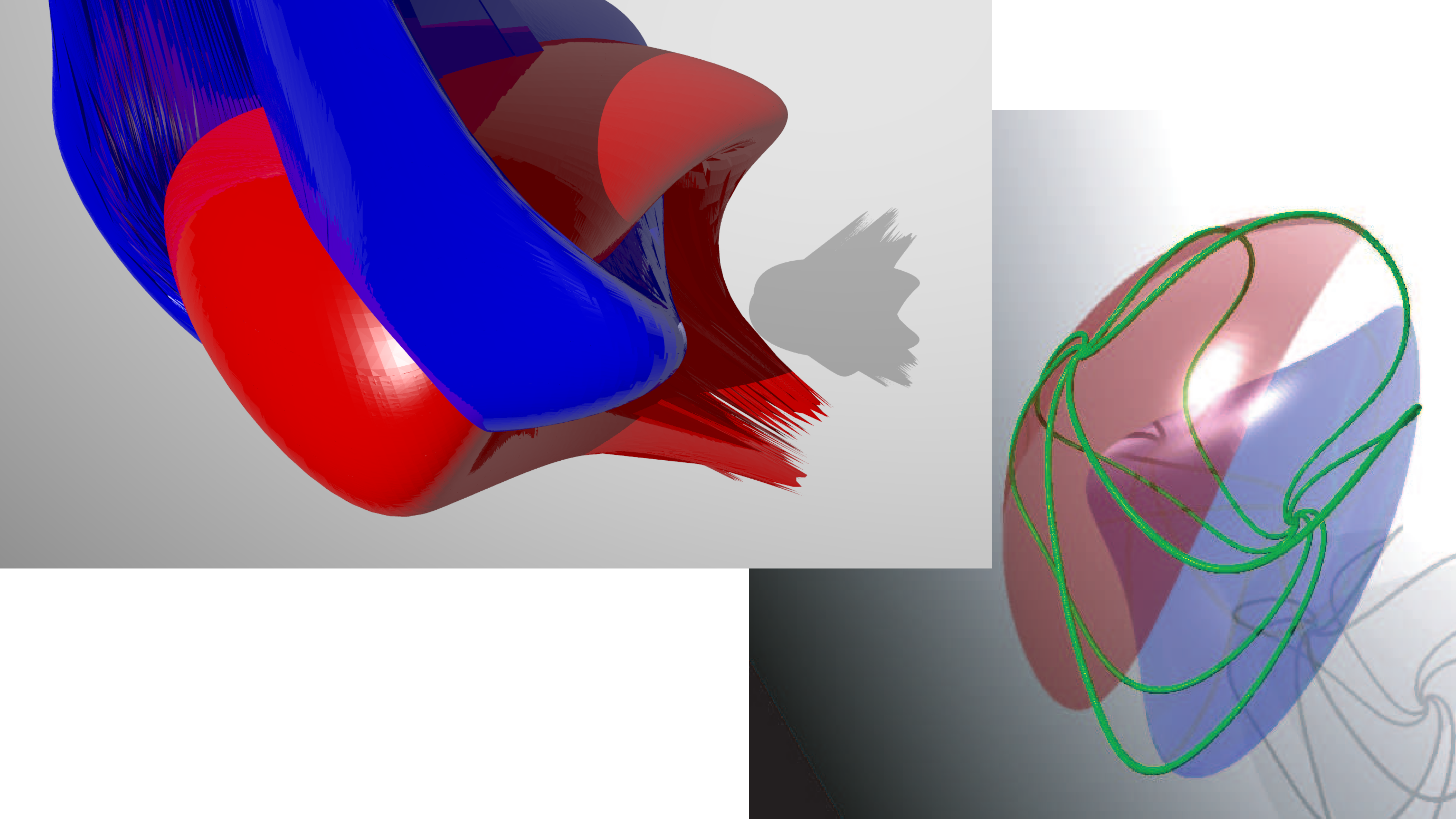}}
\end{center}
\caption{Heteroclinic intersection arcs for the Lomel\'{\i} map 
(see Equation \eqref{eq:LomeliMap}).  Unstable manifolds in 
blue, stable manifolds in red, intersection curves in green.
The intersection of the manifolds yields six distinct curves.
Every point on a curve is a heteroclinic orbit.
(All references to color refer to the online version). 
}  \label{fig:LomeliMap1}
\end{figure}

\begin{figure}[t] 
\begin{center}
%\scalebox{0.425}{\includegraphics{heteroclinicPrimaryLoops_WsWuGlobal}}
\scalebox{0.175}{\includegraphics{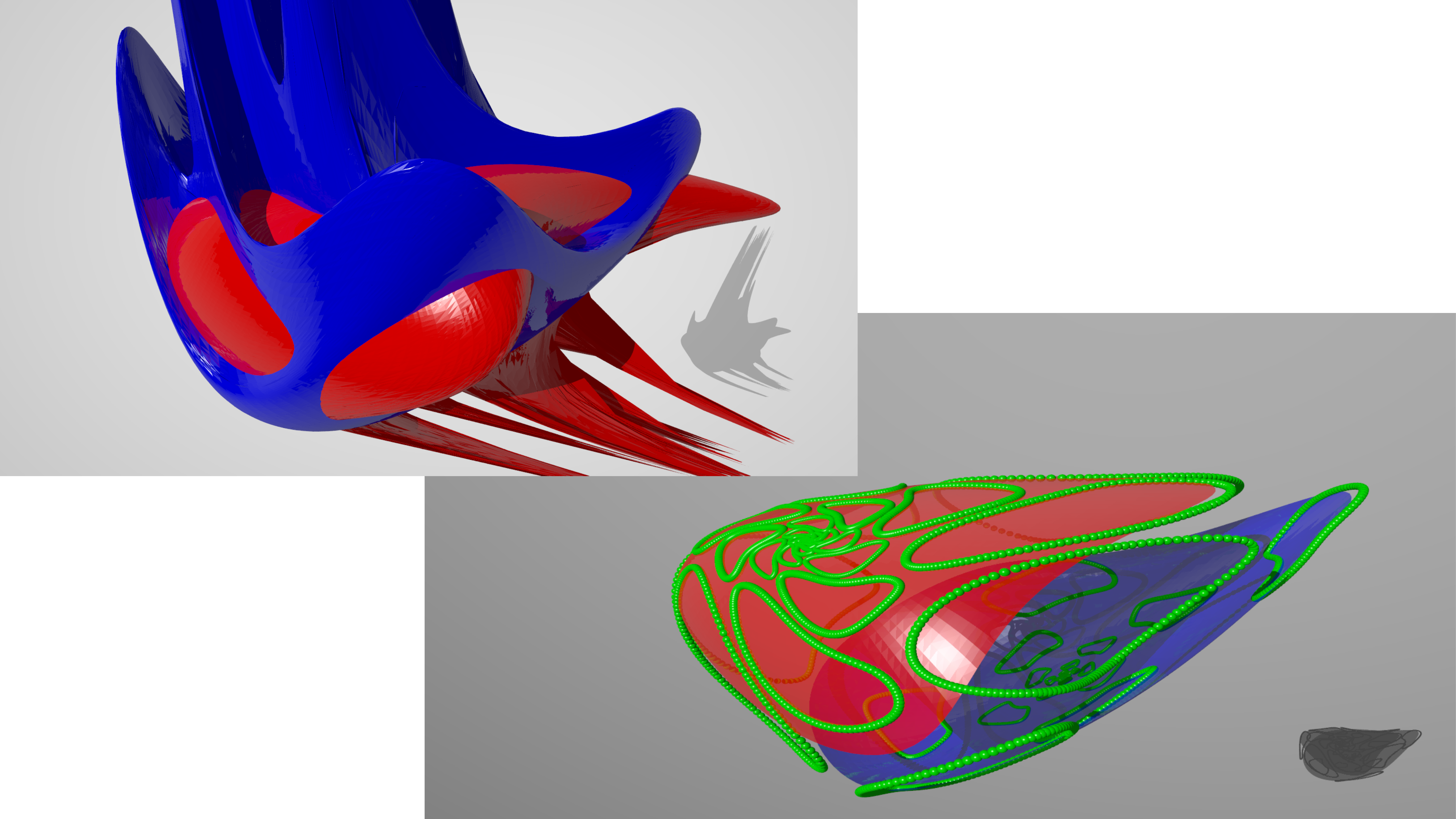}}
\end{center}
\caption{Heteroclinic intersection loops for the Lomel\'{\i} map 
(see Equation \eqref{eq:LomeliMap}).  Unstable manifolds in blue, 
stable manifolds in red, intersection curves in green.  The intersection
of the manifolds yields a countable system of loops.  Every point on a
loop is a heteroclinic orbit. 
(All references to color refer to the online version).
} \label{fig:LomeliMap2}
\end{figure}

In the present work we  develop a computer assisted argument for proving the
existence of one dimensional intersections between stable/unstable
manifolds of distinct fixed points. Our arguments utilize the underlying
dynamics of the map in order to obtain the alpha/omega limit sets of the
intersection manifolds. In addition to abstract existence results our argument 
yields error bounds on the location of the intersection in phase space. 
Moreover we see that transversality of the intersections follows as a natural 
corollary, so that the heteroclinic intersections we obtain are in fact normally hyperbolic 
invariant manifolds.

The two main ingredients in our argument are high order numerical 
computation of the local stable/unstable manifolds with mathematically 
rigorous bounds on the truncation error, and a validated branch following algorithm  
used to rigorously enclose the intersections of these manifolds.
Our treatment of the stable/unstable manifolds is based on the 
Parameterization method for invariant manifolds 
\cite{MR1976079, MR1976080, MR2177465,  MR2240743, MR2289544}.
The Parameterization Method is 
a general functional analytic framework for studying invariant manifolds
which reduces questions about the 
manifold to questions about the solutions of a certain 
nonlinear operator equation.  
Reformulating the problem in terms of an operator equation facilitates 
the design of efficient numerical algorithms for computing 
the manifold, and introduces a notion of 
a-posteriori error for the numerical approximations.  
 A-posteriori analysis of the operator leads to computer
assisted bounds on the truncation errors.

Next we use these parametric representations of the local stable/unstable 
manifolds to recast the desired 
heteroclinic intersection as the one-dimensional zero set of a certain 
finite dimensional map.  An approximate zero set is computed 
numerically via a Newton/continuation scheme, and a uniform 
Newton-Krawczyk argument is applied along the $d-k$ dimensional 
approximate zero set yields a mathematically rigorous enclosure
of the true solution.  This validated branch following is complicated 
by the fact that the finite dimensional map contains terms
given by multiple compositions of the underlying
nonlinear dynamical system. A carefully chosen coordinate 
system is defined along the branch which allows us to mitigate 
the so called ``wrapping effect''.
We mention that a number of other authors have developed methods
for validated computation of zero sets, and refer the 
interested reader to the works of 
 \cite{ MR2679365, MR2539195, MR2338393,  galepu, MR2326238, MR2630003}, 
 and the references discussed therein for a more 
complete overview of the literature.

We illustrate our method in a concrete example, and study 
some global heteroclinic sets in $\mathbb{R}^3$,
which are defined as the transverse intersection
of the unstable and stable manifolds for a pair of distinct
fixed points of the \textit{Lomel\'{\i} map}.  This
map is defined by Equation \eqref{eq:LomeliMap} in 
Section \ref{sec:vortexBubbles}, where we discuss the 
map and its dynamics in more detail.  Figures 
\ref{fig:LomeliMap1} and \ref{fig:LomeliMap2} 
illustrate the unstable and stable manifolds, as well as  
their intersections, for two different choices of parameters
for the Lomel\'{\i} map.  

Figure \ref{fig:LomeliMap1} illustrates the map 
with parameter values $a = 0.44$, $b = 0.21$,
$c = 0.35$, $\alpha = -0.25$, and $\tau = -0.3$.
The map with these parameters was also studied in 
\cite{MR2728178}.  The upper left frame suggests that the intersection
of $W^u(p_1)$ and $W^s(p_2)$ is a system of
arcs beginning and ending at the fixed points.
The numerically computed intersection of the manifolds is shown 
as a collection of green points in the lower right frame.  In 
Section \ref{sec:intersections} (see Section \ref{sec:arc-CAP}, in particular)
we prove the existence of two 
distinct intersection arcs whose iterates generate all six 
curves shown in the lower right frame.  

Figure \ref{fig:LomeliMap2} illustrates the map
with parameter values $a = 0.5$, $b = -0.5$,
$c = 1$, $\alpha = -0.08999$, and $\tau =  0.8$. Again,
the map with these parameters was also studied in
Section $4$ of \cite{MR3079670}, see
especially the bottom left frame of figure $4.5$ in that reference.
For these parameter values the heteroclinic intersections
of $W^u(p_1)$ and $W^s(p_2)$ is a system of loops,
as we see islands of red surrounded by blue in the 
upper left frame.  The lower right frame illustrates the numerically 
computed intersection of the manifolds as a collection of 
green points.  In Section \ref{sec:intersections} (see Section \ref{sec:loop-CAP}, in particular) we 
prove the existence of a single intersection loop
whose iterates generate the system of loops shown in the lower right 
frame.

The methods of the present work 
provide computer assisted proof that the heteroclinic invariant 
sets suggested by these pictures actually exist.
While we implement our methods only for intersection 
arcs for the Lomel\'{\i} family in $\mathbb{R}^3$, 
it is clear that the theoretical framework developed here applies 
much more generally.  Indeed, employing the rigorous numerical
methods for multiparameter continuation recently developed in 
\cite{galepu} it should be possible to adapt our methods 
to the study of higher dimensional manifold intersections.

The remainder of the paper is organized as follows.  In Section 
\ref{sec:prelim} we establish some notation and review some 
preliminary material including 
the definitions and basic properties of the stable and unstable 
manifolds of fixed points, the definitions and basic properties
of heteroclinic invariant sets.  We also discuss the dynamics 
of the Lomel\'{\i} map.  In Section \ref{sec:parmMeth} we review the 
basic notions of the Parameterization Methods for stable/unstable
manifolds of fixed points, and illustrate the formal computation 
of the Taylor expansion of the parameterization.  We recall an 
a-posteriori validation theorem which allows us to 
obtain rigorous computer assisted bounds on the 
truncation errors.  Finally in Section \ref{sec:intersections} we 
develop and implment the main tools of the paper, namely the curve 
following/continuation argument used to enclose the heteroclinic
arcs.  All of the codes used to obtain the results in this paper can 
be found at the web page \cite{ourHeteroArcCodes}.

\section{Preliminaries}

\label{sec:prelim}
\subsection{Notations}
Throughout this paper we shall use $B_{k}(x,R)$ to denote a ball of radius $R$
about $x \in\mathbb{R}^{k}$. To simplify notations we will also write
$B_{k}(R)$ for a ball centered at zero, and $B_{k}$ for a ball centered at
zero with radius $1$. Similarly, for $x + iy \in\mathbb{C}$ let $|z| =
\sqrt{x^{2} + y^{2}}$ denote the usual absolute value for complex numbers and
for $z = (z_{1}, \ldots, z_{k}) \in\mathbb{C}^{k}$ define the norm
\[
\| z \|_{\mathbb{C}^{k}} = \max_{1 \leq i \leq k} | z_{i} |.
\]
For $R > 0$ and $z \in\mathbb{C}^{k}$ let
\[
D_{k}(z, R) = \{w \in\mathbb{C}^{k} : \| w - z \|_{\mathbb{C}^{k}} < R \},
\]
denote the poly-disk of radius $R > 0$ about $z \in\mathbb{C}^{k}$. We write
$D_{k}(R)$ for the ball centered at zero and $D_{k}$ for the unit ball
centered at zero.

If we consider $p=(x,y)$, then we will use $\pi_x p$, $\pi_y p$ for the projections onto the $x,y$ coordinates, respectively.

%\subsection{Interval arithmetic notation}

We now write out the interval arithmetic notations conventions that will be
used in the paper. 
Let $U$ be a subset of $\mathbb{R}^{k}$. We shall denote by $[U]$ an interval
enclosure of the set $U$, that is, a set
\[
\lbrack U]=\Pi_{i=1}^{k}[a_{i},b_{i}]\subset\mathbb{R}^{k},
\]
such that $U\subset\lbrack U]$. Similarly, for a family of matrixes
$\mathbf{A}\subset\mathbb{R}^{k\times m}$ we denote its interval enclosure as
$\left[  \mathbf{A}\right]  $, that is, a set
\[
\left[  \mathbf{A}\right]  =\left(  [a_{ij},b_{ij}]\right)
_{\substack{i=1,...,k\\j=1,...,m}}\subset\mathbb{R}^{k\times m},
\]
such that $\mathbf{A}\subset\left[  \mathbf{A}\right]  $. For $F:\mathbb{R}%
^{k}\rightarrow\mathbb{R}^{m}$, by $[DF(U)]$ we shall denote an interval
enclosure
\[
\lbrack DF(U)]=\left[\left\{  A\in\mathbb{R}^{k\times m}|A_{ij}\in\left[
\inf_{x\in U}\frac{\partial F_{i}}{\partial x_{j}}(x),\sup_{x\in U}%
\frac{\partial F_{i}}{\partial x_{j}}(x)\right]  \right\}\right]  .
\]
For a set $U$ and a family of matrixes $\mathbf{A}$ we shall use the
notation $\left[  \mathbf{A}\right]  \left[  U\right]  $ to denote an interval
enclosure
\[
\left[  \mathbf{A}\right]  \left[  U\right]  =\left[  \left\{  Au:A\in\left[
\mathbf{A}\right]  ,u\in\left[  U\right]  \right\}  \right]  .
\]
We shall say that a family of matrixes $\mathbf{A}\subset\mathbb{R}^{k\times
k}$ is invertible, if each matrix $A\in\mathbf{A}$ is invertible. We shall
also use the notation%
\[
\left[  \mathbf{A}\right]  ^{-1}\left[  U\right]  =\left[  \left\{
A^{-1}u:A\in\left[  \mathbf{A}\right]  ,u\in\left[  U\right]  \right\}
\right]  .
\]

\subsection{Stable/unstable manifolds of fixed points}

\label{sec:stableMan} The material in this section is standard and can be
found in any textbook on the qualitative theory of dynamical systems. We refer
for example to \cite{MR1326374, MR1396532}. Let $f \colon\mathbb{C}^{k}
\to\mathbb{C}^{k}$ be a smooth (in our case analytic) map and assume that $p
\in\mathbb{C}^{k}$ is a fixed point of $f$. Let $U \subset\mathbb{C}^{k}$ be a
neighborhood of $p$, and define the set
\[
W_{\mathrm{loc}}^{s}(p, U) = \{w \in U : f^{n}(w) \in U \, \mbox{for all } n
\geq0\}.
\]
This set is referred to as the local stable set of $p$ relative to $U$.

If $p$ is a hyperbolic fixed point, i.e. if none of the eigenvalues of $Df(p)$
are on the unit circle, then the stable manifold theorem gives that there
exists a neighbourhood $U \subset\mathbb{C}^{k}$ of $p$ so that
$W_{\mathrm{loc}}^{s}(p, U)$ is an $m$-dimensional embedded disk tangent to
the stable eigenspace at $p$. If $f$ is analytic then the embedding is analytic.

The invariant set
\begin{align*}
W^{s}(p)  & = \left\{ w \in\mathbb{R}^{n} : \lim_{n\to\infty} f^{n}(w) = p
\right\} ,
\end{align*}
is the stable manifold of $p$, and consists of all orbits which accumulate
under forward iteration of the map to the fixed point $p$. If $f$ is
invertible then we have that
\[
W^{s}(p) = \bigcup_{n=0}^{\infty}f^{-n}\left[ W_{\mathrm{loc}}^{s}(p,U)\right]
,
\]
i.e. the stable manifold is obtained as the union of all backwards iterates of
a local stable manifold.

We say that the sequence $\{x_{j}\}_{j=-\infty}^{0}$ is a backward orbit of
$x_{0}$ if
\[
f(x_{j}) = x_{j+1},
\]
for all $j \leq-1$. If $U \subset\mathbb{R}^{k}$ and $x_{j} \in U$ for all $j
\leq0$ we say that $x_{0}$ has a backward orbit in $U$. If
\[
\lim_{j \to-\infty} x_{j} = p,
\]
we say that $x_{0}$ has a backward orbit accumulating at $p$. Let $U
\in\mathbb{C}^{k}$ be an open neighborhood of $p$ and define the set
\[
W_{\mathrm{loc}}^{u}(p, U) = \{w \in U \colon w \mbox{ has a backward orbit
in } U \}.
\]
We refer to this as a local unstable manifold of $p$ relative to $U$. If $p$
is hyperbolic, then the unstable manifold theorem gives that there exists an
open neighborhood $U$ of $p$ so that $W_{\mathrm{loc}}^{u}(p, U)$ is a $k - m$
dimensional embedded disk, tangent at $p$ to the unstable eigenspace of
$Df(p)$. If $f$ is analytic the embedding is analytic. If $f$ is invertible
then the unstable manifold of $p$ under $f$ can be seen as the stable manifold
of $p$ under $f^{-1}$. In the case that $f$ is invertible, the unstable
manifold of $p$ is
\begin{align*}
W^{u}(p)  & = \left\{ w \in\mathbb{R}^{n} : \lim_{n\to\infty} f^{-n}(w) = p
\right\} \\
& = \bigcup_{n=0}^{\infty}f^{n}\left[ W_{\mathrm{loc}}^{u}(p,U)\right] .
\end{align*}

\subsection{Heteroclinic Arcs}

\label{sec:heteroArcs}

\label{sec:primary} Suppose that $p_{1}, p_{2}$ are hyperbolic fixed points of
an invertible map $f \colon\mathbb{R}^{k} \to\mathbb{R}^{k}$, and that the
invariant manifolds $W^{u}(p_{1})$ and $W^{s}(p_{2})$ are of dimension $u_{1}$
and $s_{2}$ respectively. Assume that $u_{1} + s_{2} = k+1$. If $q$ is a point
in the transverse intersection of $W^{u}(p_{1})$ and $W^{s}(p_{2})$ it then
follows that there is an arc $\gamma\colon[-a, a] \to\mathbb{R}^{k}$ having
that $\gamma(0) = q$ and that
\[
\gamma(s) \subset W^{u}(p_{1}) \cap W^{s}(p_{2}), \quad\quad\quad\text{for
all} \quad s \in[-a, a].
\]
Moreover this intersection is transverse, hence $\gamma$ is as smooth as $f$.
We refer to $\gamma$ as a \textit{heteroclinic arc}.

Since the point $\gamma(s)$, $s\in\lbrack-a,a]$ is heteroclinic from $p_{1}$
to $p_{2}$, it follows that the set
\[
\mathcal{S}(\gamma)=\bigcup_{n\in\mathbb{Z}}f^{n}(\gamma)\subset W^{u}%
(p_{1})\cap W^{s}(p_{2}),
\]
is invariant. Moreover the entire set is heteroclinic from $p_{1}$ to $p_{2}$
so that
\[
\overline{\mathcal{S}\left(  \gamma\right)  }=\mathcal{S}(\gamma)\cup
\{p_{1}\}\cup\{p_{2}\},
\]
i.e. $\mathcal{S}(\gamma)$ accumulates only at the fixed points. Then
$\overline{\mathcal{S}(\gamma)}$ is a compact invariant set. We refer to
$\overline{\mathcal{S}(\gamma)}$ as the heteroclinic invariant set generated
by $\gamma$.

Suppose that $\gamma$ can be continued to a longer curve $\tilde{\gamma}$,
i.e. suppose that there is $\tilde{\gamma}\colon\lbrack-b,b]\rightarrow
\mathbb{R}^{k}$ with $a<b$ and $\gamma=\tilde{\gamma}|_{[-a,a]}$. Then
\[
\overline{\mathcal{S}\left(  \gamma\right)  }\subset\overline{\mathcal{S}%
\left(  \tilde{\gamma}\right)  }.
\]
We are interested in the largest compact invariant set so obtained, and single
out two cases of particular interest. \medskip

\begin{definition}[fundamental heteroclinic loop] \label{def:loop}{\em
Suppose that $\gamma(-a)=\gamma(a)$, i.e. $\gamma$ is a closed loop.
Then the heteroclinic invariant set $\mathcal{S}\left(  \gamma\right)  $ is
the union of countably many closed loops accumulating at $p_{1}$ and $p_{2}$.
If each point $q\in\gamma$ is a point of transverse intersection of
$W^{u}(p_{1})$ and $W^{s}(p_{2})$ then the closed loop $\gamma$ has no self
intersections. Moreover the lack of self intersections propagates under
forward and backward iteration as $f$ is a diffeomorphism. In this case we
refer to $\gamma$ as a \textit{fundamental heteroclinic loop}. \smallskip
}
\end{definition}

An example of a heteroclinic loop connection for the Lomel\'{\i} map is given in
Figure \ref{fig:LomeliMap2}. This connection is generated by a single loop,
propagated under forward and backward iterations of $f$. \bigskip
%-----------------------------------------------------------------------------------------------------------------

\begin{definition}[$m$-fold fundamental heteroclinic arc]
{\em If $\gamma(a)=f^{m}(\gamma(-a))$ for some $m\geq1$ and
$f^{i}(\gamma\left(  -a\right)  )\notin\gamma$ for $i<m$, then the $m$-th
iterate of the arc $\gamma$ is a continuation of $\gamma$. (If $f^{m}%
(\gamma(a))=\gamma(-a)$ then reparameterize.) We refer to $\gamma$ as an
$m$\textit{-fold fundamental heteroclinic arc.} Now
\[
\mathcal{S}_{m}\left(  \gamma\right)  :=\bigcup_{i\in\mathbb{Z}}f^{im}%
(\gamma)
\]
is itself an arc connecting $p_{1}$ and $p_{2}$, which we refer to as the
\emph{heteroclinic path generated by }$\gamma$.

For any $j\in\{ 1,\ldots,m-1\}$ the set $f^{j}(\mathcal{S}_{m}\left(
\gamma\right)  )$ is another heteroclinic path from $p_{1}$ to $p_{2}$. If
each point $q\in\gamma$ is a point of transverse intersection of $W^{u}%
(p_{1})$ and $W^{s}(p_{2})$ then for each $j\in\{ 1,\ldots,m-1\}$, the sets
$f^{i}(\mathcal{S}_{m}\left(  \gamma\right)  ),$ $f^{i+j}(\mathcal{S}%
_{m}\left(  \gamma\right)  )$ are disjoint. We refer to
\[
\overline{\mathcal{S}\left(  \gamma\right)  }=\bigcup_{j=0}^{m-1}%
f^{j}(\overline{\mathcal{S}_{m}\left(  \gamma\right)  })=\bigcup
_{i\in\mathbb{Z}}f^{i}\left(  \gamma\right)  \cup\left\{  p_{1}\right\}
\cup\left\{  p_{2}\right\}  ,
\]
as an $m$\textit{-}\emph{fold heteroclinic branched manifold,} as
$\overline{\mathcal{S}\left(  \gamma\right)  }$ is composed of $m$ paths (or
branches).
}
\end{definition}

\medskip

An example of a heteroclinic branched manifold for the Lomel\'{\i} map is given
in Figure \ref{fig:LomeliMap1}. It consists of six paths, which are generated
by two distinct $3$-fold fundamental heteroclinic arcs.

In both the case of the heteroclinic arcs, and the case of heteroclinic loops, the compact invariant sets $\overline
{\mathcal{S}\left(  \gamma\right)  }$ are maximal with respect to $\gamma$, in
the sense that no larger invariant set can be obtained by continuation of the
arc $\gamma$. In either case we refer to $\gamma$ as a \textit{fundamental
heteroclinic arc}. Note that under the assumption that $\gamma$ arises as the
one dimensional transverse intersection of smooth manifolds, the
classification theorem for one dimensional manifolds gives that only these two
cases occur.

\subsection{Vortex bubbles and the Lomel\'{\i} map}

\label{sec:vortexBubbles}

\begin{figure}[h]
\label{fig:bubbleSketch}
\par
\begin{center}
\includegraphics[height = 4cm]{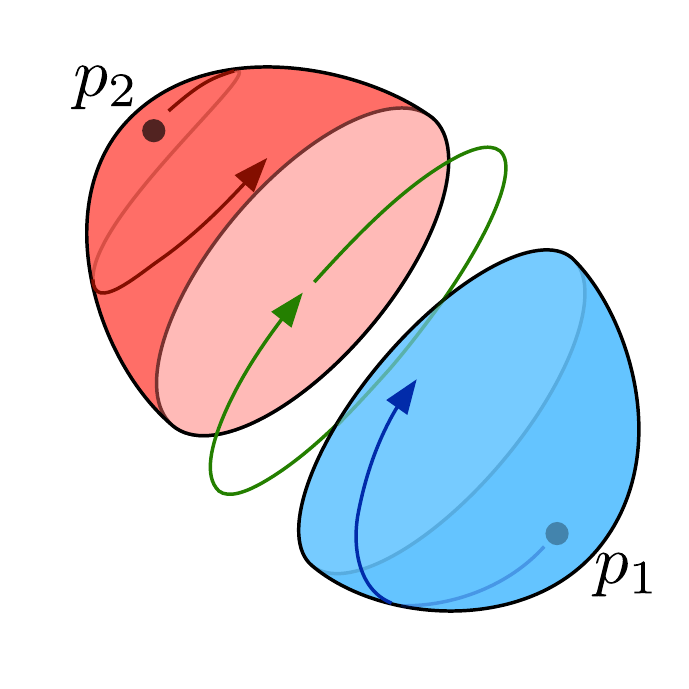}
\end{center}
\caption{Vortex bubble: in this sketch $p_{1}$ and $p_{2}$ are hyperbolic
fixed points with two dimensional unstable and two dimensional stable
manifolds respectively. The indicated unstable and stable eigenvalues occur in
complex conjugate pairs, giving the system a ``twist'' or circulation at the
fixed points. The circulation is sympathetic (clockwise or counterclockwise at
$p_{1}$ and $p_{2}$) and in the region between the fixed points there is a
``vortex''.}%
\end{figure}

In the sequel we restrict our attention to a particular dynamical
configuration known as a \textit{vortex bubble}. Heteroclinic arcs play an
important role in the study of vortex bubbles, and vortex bubbles in
$\mathbb{R}^{3}$ provide a non-trivial application, which can still be
completely visualized. Vortex bubbles appear in the fluid dynamics and plasma
physics literature as a model of turbulent circulation. See for example
\cite{MR2394421, MR1285950, MR1856972} and the references discussed therein.
At present we provide a brief qualitative sketch sufficient to our needs.

The main features of a vortex bubble are as follows. Consider $f
\colon\mathbb{R}^{3} \to\mathbb{R}^{3}$ a volume preserving diffeomorphism
(which could arise as a time one map of a volume preserving flow) having a
pair of distinct hyperbolic fixed points $p_{1}, p_{2} \in\mathbb{R}^{3}$. We
suppose that $p_{1}$ has two dimensional unstable manifold, and that $p_{2}$
has two dimensional stable manifold. Moreover we suppose that the unstable
eigenvalues at $p_{1}$ and the stable eigenvalues at $p_{2}$ occur in complex
conjugate pair, hence the linear dynamics at each fixed point is rotational.

We assume that the rotation is in the same direction at the fixed points, and
also that the curvature at $p_{1}$ and $p_{2}$ of the local unstable/stable
manifolds is such that the manifolds bend or ``cup'' toward one another.
Should the two dimensional global stable/unstable manifolds enclose a region
we say that a bubble (or resonance bubble) is formed. Under these conditions
it is not unusual that the circulation at the fixed points drives a
circulation throughout the bubble, in which case we say that there is a vortex
bubble. Inside the vortex bubble one may find invariant circles and tori, as
well as complicated chaotic motions.

The situation just described is sketched in Figure \ref{fig:bubbleSketch}. An
important global consideration is the intersection of the two dimensional
unstable and stable manifolds which, if transverse, gives rise to heteroclinic
arcs as discussed in Section \ref{sec:heteroArcs}.

One elementary mathematical model exhibiting vortex bubble dynamics is the
five parameter family of quadratic volume preserving maps
\begin{equation}
f(x,y,z)=\left(
\begin{array}
[c]{c}%
z+\alpha+\tau x+ax^{2}+bxy+cy^{2}\\
x\\
y
\end{array}
\right) , \label{eq:LomeliMap}%
\end{equation}
with $a+b+c=1$. We refer to this as the Lomel\'{\i} family, or simply as the
Lomel\'{\i} map. The map is a natural generalization of the H\'enon map from two
to three dimensions, and is the subject of a number of studies
\cite{MR2481277,  MR1704974, MR1617998, MR3079670}. In particular the
Lomel\'{\i} map provides a normal form for volume preserving quadratic
diffeomorphisms with quadratic inverse, and is a toy model for turbulent
fluid/plasma flow near a vortex \cite{MR1704974, MR3079670, MR2728178,
MR3068557}.

For typical parameters the map has two hyperbolic fixed points $p_{1}$ and
$p_{2}$ with stability as discussed above. The global embedding of the stable
and unstable manifolds for the system is illustrated in two specific instances
in Figures \ref{fig:LomeliMap1} and \ref{fig:LomeliMap2} of the Introduction.
These
computations suggest that both case 1 of heteroclinic arcs (in
this case) 3-fold, and case 2 of heteroclinic loops occur for the system as defined in Section
\ref{sec:heteroArcs} occur for the Lomel\'{\i} map. In the sequel we prove, by
a computer assisted argument, that this is indeed the case.

%TCIDATA{OutputFilter=latex2.dll}
%TCIDATA{Version=5.00.0.2606}
%TCIDATA{LaTeXparent=0,0,online-edit.tex}

\section{Review of the parameterization method
for stable/unstable manifolds of fixed points}
\label{sec:parmMeth}

Let $f \colon \mathbb{C}^k \to \mathbb{C}^k$ be a smooth map
and suppose that 
$p \in \mathbb{C}^k$ is a hyperbolic fixed point as in 
Section \ref{sec:stableMan}.  Then the eigenvalues 
$\lambda_1, \ldots, \lambda_k \in \mathbb{C}$ for  
$Df(p)$ have
\[
|\lambda_j| \neq 1, 
\quad \quad \text{for all} \quad \quad
1 \leq j \leq k,
\]
i.e. none of the eigenvalues are on the unit circle.
Let $\lambda_1, \ldots, \lambda_s \in \mathbb{C}$
denote the stable eigenvalues of $Df(p)$, where 
$s \leq k$.  We order the stable eigenvalues so that 
\[
|\lambda_s| \leq \ldots  \leq | \lambda_1| < 1,
\]
and so that $\lambda_j$ is unstable when $s < j \leq k$.

For the sake of simplicity, suppose that $Df(p)$ is diagonalizable
and let $\xi_1, \ldots, \xi_k \in \mathbb{C}^k$ denote a choice of 
associated eigenvectors.  Then 
\[
Df(p) = Q \Lambda Q^{-1},
\] 
where 
\[
\Lambda = \left(
\begin{array}{ccc}
\lambda_1 & \ldots & 0 \\
\vdots & \ddots & \vdots \\
0 & \ldots & \lambda_k 
\end{array}
\right),
\]
is the $k \times k$ diagonal matrix of eigenvalues and 
\[
Q = [\xi_1, \ldots, \xi_k],
\]
is the $k \times k$ matrix whose $j$-th 
column is the eigenvector associated with $\lambda_j$.
We write 
\[
\Lambda_s = \left(
\begin{array}{ccc}
\lambda_1 & \ldots & 0 \\
\vdots & \ddots & \vdots \\
0 & \ldots & \lambda_s 
\end{array}
\right),
\]
to denote the $s \times s$ diagonal matrix of stable eigenvalues.

In the present work we assume that $f$ is analytic 
in a neighborhood of $p$.
Let  
\[
D_s = \{ \mathbf{z} = (z_1, \ldots, z_s) \in \mathbb{C}^s : |z_j| < 1 
\, \mbox{for each} \, 1 \leq j \leq s \},
\]
denote the $s$-dimensional unit poly-disk in $\mathbb{C}^s$.
The goal of the Parameterization Method is to find an analytic map 
$P \colon D_s \to \mathbb{C}^k$ having that 
\begin{equation} \label{eq:parmLinearConstraints}
P(0) = p, \quad \quad \quad \quad DP(0) = [\xi_1, \ldots, \xi_s],
\end{equation}
and 
\begin{equation}\label{eq:parmInvEq}
f[P(z_1, \ldots, z_s)] = P(\lambda_1 z_1, \ldots, \lambda_s z_s),
\end{equation}
for all $(z_1, \ldots, z_s) \in D_s$.  
Such a map $P$ parameterizes a local stable manifold for $f$
at $p$, as the following lemma makes precise.

\begin{lemma}\label{lem:Par-lem}
Suppose that $P$ is an analytic map satisfying the first 
order constraints in \eqref{eq:parmLinearConstraints} and solving 
Equation \eqref{eq:parmInvEq} in $D_s \subset \mathbb{C}^s$. 
Then $P$ is a chart map 
for a local stable manifold at $p$, i.e.  
\begin{enumerate}
\item for all $z \in P[D_s]$ the orbit of $z$ accumulates
at $p$,
\item $P[D_s]$ is tangent to the stable eigenspace at $p$, 
\item $P$ is one-to-one on $D_s$, i.e. $P$ is a chart map.
\end{enumerate} 
\end{lemma}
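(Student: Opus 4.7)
The plan is to prove the three conclusions by exploiting the conjugacy \eqref{eq:parmInvEq} to transfer information from a small neighborhood of $0 \in D_s$, where $P$ is well-controlled by the linear data \eqref{eq:parmLinearConstraints}, out to all of $D_s$.

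For conclusion (1), I would simply iterate the conjugacy. Given $w \in P[D_s]$, write $w = P(z)$ for some $z \in D_s$. Induction on \eqref{eq:parmInvEq} yields
\[
f^n(w) = f^n(P(z)) = P(\Lambda_s^n z)
\]
for all $n \geq 0$. Since $|\lambda_j| < 1$ for $1 \leq j \leq s$, the iterates $\Lambda_s^n z$ converge to $0$ in $D_s$ as $n \to \infty$, and continuity of $P$ together with $P(0) = p$ gives $f^n(w) \to p$. Conclusion (2) is immediate from \eqref{eq:parmLinearConstraints}: the tangent space to $P[D_s]$ at $p$ is the column span of $DP(0) = [\xi_1, \ldots, \xi_s]$, which is precisely the stable eigenspace of $Df(p)$.

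The injectivity claim (3) is the step requiring real work, but the conjugacy makes it manageable. Since the stable eigenvectors $\xi_1,\ldots,\xi_s$ are linearly independent (they belong to a basis of eigenvectors for the diagonalizable matrix $Df(p)$), the matrix $DP(0)$ has full rank $s$. The inverse function theorem then provides a poly-disk $D_s(r) \subset D_s$ with $0 < r < 1$ on which $P$ is one-to-one.

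To upgrade this local injectivity to injectivity on all of $D_s$, suppose for contradiction that $z, z' \in D_s$ satisfy $P(z) = P(z')$ with $z \neq z'$. Applying $f^n$ to both sides and using the conjugacy,
\[
P(\Lambda_s^n z) = f^n(P(z)) = f^n(P(z')) = P(\Lambda_s^n z').
\]
Because $\|\Lambda_s^n z\|_{\mathbb{C}^s}, \|\Lambda_s^n z'\|_{\mathbb{C}^s} \to 0$, for all sufficiently large $n$ both iterates lie in $D_s(r)$, where $P$ is injective. Therefore $\Lambda_s^n z = \Lambda_s^n z'$, and since $\Lambda_s$ is invertible (no stable eigenvalue is zero, as $p$ is a hyperbolic fixed point of the invertible linearization) we conclude $z = z'$, a contradiction. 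The main subtlety in this step is only bookkeeping about when the iterates enter the local injectivity ball, and that $\Lambda_s$ is genuinely invertible; both are handled easily under our standing hypotheses.
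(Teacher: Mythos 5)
Your proof is correct and follows essentially the same route the paper sketches: iterate the conjugacy \eqref{eq:parmInvEq} together with $P(0)=p$ and continuity for (1), read (2) directly off the constraint on $DP(0)$ in \eqref{eq:parmLinearConstraints}, and for (3) combine local injectivity near the origin (full rank of $DP(0)$) with the fact that forward iteration $\Lambda_s^n$ contracts any pair $z,z'$ into the local injectivity neighborhood and that $\Lambda_s$ is invertible. The only cosmetic difference is that the paper phrases the local step via the implicit function theorem while you phrase it via the inverse function theorem (and a bit more carefully, since $P$ maps $\mathbb{C}^s$ into $\mathbb{C}^k$ with $s\le k$, one should say ``a map with injective differential at a point is locally injective'' rather than invoke the inverse function theorem verbatim), but the substance is the same.
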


The proof of Lemma \ref{lem:Par-lem} follows along the same lines as Section 
$3.1$ of \cite{MR3079670}. Here we sketch the argument: Point $1.$ is seen by iterating the invariance equation \eqref{eq:parmInvEq},
considering the continuity of $P$ and the requirment that $P(0) = p$.  
 Point $2.$ follows directly from the first order
constraint on $DP(0)$ given in Equation \eqref{eq:parmLinearConstraints}, and 
point $3.$ is seen by applying the implicit function theorem 
to show that $P$ is one-to-one in a small neighborhood
of the origin in $D_s$, and then using that $f$ and $\Lambda_s$ 
are invertible maps to show that $P$ is one-to-one anywhere
that Equation \eqref{eq:parmInvEq} holds.

Since we seek $P$ analytic on a disk and satisfying first order constraints
its natural to look for a power series representation
\begin{equation}\label{eq:powerSeries}
P(z_1, \ldots, z_s) = \sum_{\alpha_1 = 0}^\infty \ldots \sum_{\alpha_s = 0}^\infty
p_{\alpha_1,  \ldots, \alpha_s} z_1^{\alpha_1} \ldots z_s^{\alpha_s}
= \sum_{|\alpha| = 0}^\infty p_\alpha z_\alpha, 
\end{equation}
where $\alpha = (\alpha_1, \ldots, \alpha_s) \in \mathbb{N}^s$ is an 
$s$-dimensional multi-index, 
\[
|\alpha| = \alpha_1 + \ldots + \alpha_s,
\] 
$p_{\alpha_1, \ldots, \alpha_s} = p_\alpha
\in \mathbb{C}^k$ for each $\alpha \in \mathbb{N}^s$, and 
$z^\alpha = z_1^{\alpha_1} \ldots z_s^{\alpha_s} \in \mathbb{C}$
for each $z \in D_s$, $\alpha \in \mathbb{N}^s$.
Imposing the first order constraints \eqref{eq:parmLinearConstraints}
gives
\[
p_{0, \ldots, 0} = 0, 
\quad \quad \mbox{and} \quad \quad 
p_{e_j} = \xi_j, 
\]
where for each $1 \leq j \leq s$ the multi-index $e_j$ is given by 
$e_j = (0, \ldots, 1, \ldots, 0)$, i.e. $e_j$ has a one in the $j$-th entry
and zeros elsewhere.
The Taylor coefficients $p_\alpha$ for $|\alpha| \geq 2$
can be determined by a power matching scheme.   
This procedure is illustrated by example in the next section.

\begin{remark}[Unstable manifold parameterization]\label{rem:unstableParm}
{\em
The considerations above apply to the unstable manifold of $f$ at $p$ by 
considering the stable manifold of the inverse map $f^{-1}$ at $p$.  
In fact the equation becomes 
\[
f^{-1}[P(z_1, \ldots, z_u)] = P(\sigma_1 z_1, \ldots, \sigma_u z_u),
\]
where $u$ is the number of stable eigenvalues of $D f^{-1}(p)$
and $\sigma_1, \ldots, \sigma_u$ denote these stable eigenvalues.  
Of course the stable eigenvalues for $Df^{-1}(p)$ are the reciprocals
of the unstable eigenvalues for $Df(p)$ so that $u = k - s$.  
Applying $f$ to both sides of the equation and pre-composing 
$P$ with $\sigma_1^{-1}, \ldots, \sigma_u^{-1}$ we obtain
\[
P(\sigma^{-1}_1 z_1, \ldots, \sigma^{-1}_u z_u) = f[P(z_1, \ldots, z_u)],
\]
with $\sigma^{-1}_i = \lambda_i$ the unstable 
eigenvalues of $Df(p)$.  This shows that chart maps 
for the stable and unstable manifolds satisfy the same Equation 
\eqref{eq:parmInvEq}.  The difference is that in one 
case we conjugate to the linear map given by 
the stable eigenvalues of $Df(p)$ and in 
the other case the linear map given by the unstable eigenvalues.  
}
\end{remark}

\subsection{Example: 2D manifolds associated 
with complex conjugate eigenvalues for the Lomel\'{\i} family}
In this section we write out in full detail how the  parameterization method works on the concrete example of the Lomel\'{\i} map. 
We include these formal computations for the sake of completeness.

Suppose that $p \in \mathbb{R}^3$ is a fixed point of the Lomel\'{\i} map
and that $Df(p)$ has a pair of stable complex conjugate eigenvalues 
$\lambda, \bar \lambda \in \mathbb{C}$, i.e. $|\lambda| = |\bar \lambda| < 1$.
Let $\xi, \bar \xi \in \mathbb{C}^3$ denote the complex conjugate 
eigenvectors.  Note that since the Lomel\'{\i} map is 
volume preserving, it is the case that the remaining 
eigenvalue is real and unstable.

Take $v, w$ in the unit disk in $\mathbb{C}$ and write
\[
P(v, w) = 
\left(
\begin{array}{c}
P_1(v,w) \\
P_2(v,w) \\
P_3(v,w)
\end{array}
\right)
= \sum_{k = 0}^\infty \sum_{l = 0}^\infty 
\left(
\begin{array}{c}
p_{k l}^1 \\
p_{k l}^2 \\
p_{k l}^3 
\end{array}
\right) v^k w^l.
\]
We see that 
\[
P(0,0) = 
\left(
\begin{array}{c}
p_{00}^1 \\
p_{00}^2 \\
p_{00}^3 
\end{array}
\right) = p,
\quad
\frac{\partial}{\partial v} P(0,0) = 
\left(
\begin{array}{c}
p_{10}^1 \\
p_{10}^2 \\
p_{10}^3 
\end{array}
\right) = \xi,
\quad
\frac{\partial}{\partial w} P(0,0) = 
\left(
\begin{array}{c}
p_{01}^1 \\
p_{01}^2 \\
p_{01}^3 
\end{array}
\right) = \bar \xi,
\]
by imposing the first order constraints of 
Equation \eqref{eq:parmLinearConstraints}. 
Plugging the unknown power series expansion of $P$ 
into the invariance Equation \eqref{eq:parmInvEq} gives 
\begin{align*}
& f[P(v,w)] = \\
& =\left(
\begin{array}{c}
P_3(v,w) + \alpha + \tau P_1(v,w) + 
a P_1(v,w)^2 + b P_1(v,w) P_2(v,w) + c P_2(v,w)^2 \\
P_1(v,w) \\
P_2(v,w)
\end{array}
\right) \\
&= %\tiny
{\sum_{k=0}^\infty \sum_{l=0}^\infty
\left(
\begin{array}{c}
p_{kl}^3 + \delta_{kl} \alpha + \tau p_{kl}^1 + 
\sum_{i = 0}^k \sum_{j = 0}^l \left(a p_{k-i l-j}^1 p_{ij}^1
+b p_{k-i l-j}^1 p_{ij}^2 + c  p_{k-i l-j}^2 p_{ij}^2\right) \\
p_{kl}^1 \\
p_{kl}^2
\end{array}
\right) v^k w^l,}
\end{align*}
on the left (where $\delta_{kl} = 0$ if $k=0$ and 
$l=0$ and $\delta_{kl} = 1$ otherwise), and 
\[
P(\lambda v, \bar \lambda w) = 
\sum_{k=0}^\infty \sum_{l = 0}^\infty \lambda^k \bar \lambda^l
\left(
\begin{array}{c}
p_{kl}^1 \\
p_{kl}^2 \\
p_{kl}^3 
\end{array}
\right)
v^k w^l,
\]
on the right.
Matching like powers leads to 
\begin{equation*}
 \left(
\begin{array}{c}
p_{kl}^3 + \delta_{kl} \alpha + \tau p_{kl}^1 + 
\sum_{i = 0}^k \sum_{j = 0}^l a p_{k-i l-j}^1 p_{ij}^1
+b p_{k-i l-j}^1 p_{ij}^2 + c  p_{k-i l-j}^2 p_{ij}^2 \\
p_{kl}^1 \\
p_{kl}^2
\end{array}
\right) 
= \lambda^k \bar \lambda^l
\left(
\begin{array}{c}
p_{kl}^1 \\
p_{kl}^2 \\
p_{kl}^3 
\end{array}
\right)
% = 
%\left(
%\begin{array}{c}
%0 \\
%0 \\
%0 
%\end{array}
%\right)
\end{equation*}
for all $k + l \geq 2$.
Extracting terms of order $kl$ and isolating them on the left hand side
leads to the linear \textit{homological} equations
\begin{equation}\label{eq:parmLinEqnsLomeli}
\left[
\begin{array}{ccc}
  \tau + 2 a + b - \lambda^k \bar \lambda^l &  b + 2c  &  1  \\
 1   & - \lambda^k \bar \lambda^l   & 0   \\
 0   & 1   &  - \lambda^k \bar \lambda^l
\end{array}
\right]
\left(
\begin{array}{c}
p_{kl}^1 \\
p_{kl}^2 \\
p_{kl}^3
\end{array}
\right) = s_{kl},
\end{equation}
where
\[
s_{kl} = 
\left(
\begin{array}{c}
  - \sum_{i = 0}^k \sum_{j = 0}^l \hat \delta_{ij}
  \left(a p_{k-i l-j}^1 p_{ij}^1
+b p_{k-i l-j}^1 p_{ij}^2 + c  p_{k-i l-j}^2 p_{ij}^2 \right)
   \\
   0 \\
   0 \\
\end{array}
\right)
\]
and the coefficient
\[
\hat \delta_{ij} = 
\begin{cases}
0  & i = 0  \text{ and } j = 0, \\
0  & i = k  \text{ and } j = l, \\
1 & \text{otherwise,}
\end{cases}
\]
accounts for the fact that terms of order $kl$
have been extracted from the sums.
In other words: the $s_{kl}$ depend only on terms $p_{ij}$ where
$i + j < k + l$.

The question arises: for what $k, l$ with $k + l \geq 2$
does the linear Equation \eqref{eq:parmLinEqnsLomeli}
have a unique solution?
Direct inspection of the formula for $Df(p)$ when $f$
is the Lomel\'{\i} map allows us to rewrite the homological equation
as
\begin{equation} \label{eq:homEq}
\left( Df(p) - \lambda^k \bar \lambda^l \mbox{Id} \right) p_{kl} = s_{kl}.
\end{equation}
Note that the matrix on the left hand side 
is characteristic for $Df(p)$.  In other words, this matrix 
is invertible as long as $\lambda^k \bar \lambda^l$
is not an eigenvalue of $Df(p)$.  
Since both $\lambda^k \bar \lambda^l = \lambda$
and $\lambda^k \bar \lambda^l = \bar \lambda$ are impossible for 
$k + l \geq 2$, and since the remaining eigenvalue is unstable,
we see that this matrix is invertible for all cases of concern.  
Then the Taylor coefficients of $P$ are formally well defined to 
all orders. Moreover we obtain a numerical 
algorithm by recursively solving the homological equations
to any desired order.
We write 
\[
P^N(v, w) = \sum_{n = 0}^N 
\sum_{k + l = n} p_{kl} v^k w^l,
\]
to denote the $N$-th order polynomial obtained
by solving the homological equations to order $N$.
We remark that by solving these homological equations
using interval arithmetic we obtain validated interval enclosures
of the true coefficients. 
This discussion goes through unchanged
if $\lambda, \bar \lambda \in \mathbb{C}$ are 
unstable rather than stable eigenvalues.

\begin{remark}
{\em
Note that the Lomel\'{\i} map is real analytic, and for 
real fixed points $p \in \mathbb{R}^3$ we are interested 
in the real image of $P$.  Using the fact that $Df(p)$ is 
a real matrix when $p$ is a real fixed point, we see that 
\[
\overline{
Df(p) - \lambda^k \bar \lambda^l  \mbox{Id}}
= Df(p) - \bar \lambda^k  \lambda^l \mbox{Id}, 
\] 
and similarly it is easy to check that $\overline{s_{kl}} 
= s_{lk}$ for all $k,l$.  Since we choose the first order 
coefficients so that 
\[
\overline{p_{10}} = \overline{\xi} = p_{01},
\]
it follows that the solutions of all homological equations
inherit this property, i.e. that 
\[
\overline{p_{kl}} = p_{lk}
\]
for all $k + l \geq 2$.  

Choosing complex conjugate variables 
\[
v = s + it 
\quad \quad \mbox{and} \quad \quad w = s - it,
\]
with $s, t$ real we see that the map $\hat P \colon B \to \mathbb{R}^3$
given by 
\[
\hat P(s, t) = P(s + it, s-it),
\]
is real valued.  Moreover the constraint that $v, w$ lie in the unit 
Poly-disk in $\mathbb{C}^2$ imposes that 
\[
B_2 = \left\{(s, t) \in \mathbb{R}^2 : \sqrt{s^2 + t^2} < 1\right\},
\]
is the natural domain for $\hat P$.  
These remarks show also that the
truncated polynomial approximation $P^N$ has complex 
conjugate coefficients, and hence a real image when we make
use the complex conjugate variables, as long as we include
each coefficients $p_{\alpha}$ with $2 \leq |\alpha| \leq N$ in our
approximation.
}
\end{remark}
\begin{remark}[Uniqueness and decay rate of the coefficients]
{\em
The discussion of the homological equation above 
shows that the Taylor coefficients $p_{kl}$ are unique
up to the choice of the length of $\xi$.  
In fact it can be shown that if $\hat{p}_{kl}$ are 
the coefficients associated with the scaling 
$|\xi| = |\bar \xi| = 1$,
and $p_{kl}$ are the coefficients associated with the 
scaling $|\xi| = |\bar \xi| = \tau$, then we have the relationship
\[
p_{kl} = \tau^{k + l} \hat{p}_{kl}.
\]
A simple proof of this fact is found for example 
in \cite{maxManifolds}. 
Then the length of $\xi$ adjusts the decay rate of the 
power series coefficients.
This freedom is exploited in order to stabilize numerical computations.
}
\end{remark}
\begin{remark}[A-posteriori error]
{\em
The computations above are purely formal, and in practice 
we would like to measure the accuracy of the approximation
on a fixed domain.  In order to make such a 
measurement  we exploit that the lengths of the eigenvectors
tune the decay rates of the power series coefficient, so that 
we can always fix the domain of $P^N$ to be the unit disk.  
Equation \eqref{eq:parmInvEq} then suggests we define the
a-posteriori error functional
\[
\epsilon_N = \sup_{|v|, |w| < 1} 
\| f[P^N(v,w)] - P^N(\lambda v, \bar \lambda w) \|_{\mathbb{C}^3}.
\]
This quantity provides a heuristic indicator of the quality of the 
approximation $P^N$ on the unit disk.  
Of course small defects do not necessarily imply small 
truncation errors. In Section \ref{sec:aPosParm} we discuss a 
method which makes this heuristic indicator precise.

Note that if $N$ is fixed then $\epsilon_N$ is a function of the length of 
$\xi$ only.  Then by varying the length we can make $\epsilon_N$ 
as small as we wish (up to machine errors).  To see this we simply 
note that $f \circ P^N$ is exactly equal to $P^N \circ \Lambda_s$ to
zero and first order, so that the function $f \circ P^N - P^N \circ \Lambda_s$
is zero to second order and has higher order coeffieicnts decaying as
fast as we wish.  Once a desired error is achieved fixed 
we increase $N$ and repeat the procedure.  In this way one
can optimize the size of the image of $P^N$ relative to a fixed 
desired error tolerance.  Again we refer the interested
reader to \cite{maxManifolds}, where such algorithms are 
discussed in more detail.
}
\end{remark}
\begin{remark}[Generalizations]
{\em
The computations sketched above succeed in much  
greater generality.  
For example when we study an $m$ dimensional 
manifold of an analytic mapping 
$f \colon \mathbb{C}^k \to \mathbb{C}^k$, then 
looking for a parameterization of the form 
\[
P(z) = \sum_{|\alpha| = 0}^\infty p_\alpha z^\alpha,
\]
leads to a homological equation of the form
\[
\left[ Df(p) - \lambda_1^{\alpha_1} \ldots
\lambda_s^{\alpha_s} \mbox{Id} \right] p_\alpha = s_\alpha,
\]
for the coefficients with $|\alpha| \geq 2$.  Here again
$s_\alpha$ depends only on terms of order lower 
than $|\alpha|$, and the form of $s_\alpha$ is determined
by the nonlinearity of $f$.
Note that the general case leads to the non-resonance conditions
\[
 \lambda_1^{\alpha_1} \ldots
\lambda_s^{\alpha_s} \neq \lambda_j 
\quad \quad \quad \mbox{for} \quad 1 \leq j \leq s.
\]
Inspection of these conditions shows that the
non-resonance conditions hold generically, i.e. they reduce to a 
finite collection of constraint equations. 
Moreover, should a resonance occur it is still possible for the Parameterization
Method to succeed.  However when there is a resonance,
rather than conjugating to the 
linear map generated by the stable eigenvalues,
it is necessary to conjugate the 
dynamics on the manifold to a certain 
polynomial which ``kills'' the resonant terms. The general
development of the Parameterization Method 
for stable/unstable manifolds of non-resonant fixed points
is in \cite{MR1976079, MR1976080, MR2177465}.
Validated numerical methods for the resonant case 
as well as the non-diagonalizable case are 
developed in \cite{parmChristian}.
}
\end{remark}

\subsection{A-posteriori validation and computer assisted error bounds}
\label{sec:aPosParm}
We say that an analytic function 
$h \colon D_s \to \mathbb{C}^k$ is an analytic $N$-tail 
if the Taylor coefficients of $h$ are zero to $N$-th order, 
i.e. if 
\[
h(z) = \sum_{|\alpha| = 0}^\infty h_\alpha z^\alpha,
\]
and 
\[
h_\alpha = 0, 
\quad \quad \quad \mbox{for } 0 \leq |\alpha| \leq N.
\]
Such functions are used to represent truncation errors in 
power series methods.
For $p \in \mathbb{C}^k$ and $R > 0$ let
\[
D_k (p, R) = \{z \in \mathbb{C}^k : \| z - p\| < R \},
\]
We make the following assumptions.

\medskip

\noindent \textbf{A1:} Assume that 
$f \colon D_k(p, R) \subset \mathbb{C}^k \to \mathbb{C}^k$
is analytic and that $p \in \mathbb{C}^k$ has $f(p) = p$.

\medskip

\noindent \textbf{A2:} Assume that $Df(p)$ is nonsingular, diagonalizable, and 
hyperbolic.  Let $\{\lambda_1, \ldots, \lambda_{s}\}$
denote the stable eigenvalues, $\{\xi_1, \ldots, \xi_s\}$ denote a 
choice of corresponding eigenvectors, $\Lambda_s$ denote the 
$s \times s$ diagonal matrix of stable eigenvalues, and 
$A_s = [\xi_1, \ldots, \xi_s]$ denote the $k \times s$ matrix 
whose columns are the stable eigenvectors.

\medskip

\noindent \textbf{A3:}  Assume that $P^N \colon \mathbb{C}^s \to \mathbb{C}^k$
is an $N$-th order polynomial, with $N \geq 2$.  Assume that $P^N$ is an 
exact formal solution of the equation 
\[
f \circ P^N = P^N \circ \Lambda_s,
\]
to $N$-th order, that is, we assume that the power series of the right hand 
side is equal to the power series of the left hand side exactly up to $N$-th order.

\bigskip

The following definition collects some constants which are critical in the 
a-posteriori validation theorem to follow.

\begin{definition}[Validation values for the Stable Manifold]
{\em
Let $f \colon \mathbb{C}^k \to \mathbb{C}^k$ and 
$P^N \colon \mathbb{C}^s \to \mathbb{C}^k$ be as in assumptions 
$A1$- $A3$. A collection of positive constants $\epsilon_{\mathrm{tol}}$, 
$R$, $R'$, $\mu^*$, $K_1$ and $K_2$ are called 
\textit{validations values} for $P^N$ if 
\begin{equation}\label{eq:valVal1}
\sup_{z \in D_s} 
\| f [P^N(z)] - P^N (\Lambda_s z) \|_{\mathbb{C}^k} \leq \epsilon_{\mathrm{tol}},
\end{equation}
\begin{equation} \label{eq:valVal2}
\sup_{z \in D_s} \| P^N(z) - p \|_{\mathbb{C}^k} \leq R' < R,
\end{equation}
\begin{equation} \label{eq:valVal3}
0 < \max_{1 \leq j \leq s} |\lambda_j| \leq \mu^* < 1,
\end{equation}
\begin{equation}\label{eq:valVal4}
\sup_{z \in D_s} \| [Df]^{-1}(P^N(z))\| \leq K_1, 
\end{equation}
\begin{equation}\label{eq:valVal5}
\max_{\stackrel{\beta \in \mathbb{N}^m}{|\beta| = 2}} 
\max_{1 \leq j \leq k} \sup_{  q \in D_k(p, R)} \| \partial^\beta
f_j(q)\|_{\mathbb{C}} \leq K_2.
\end{equation}
}
\end{definition}

Some explanation of the meaning of these constants is
appropriate.  In Equation \eqref{eq:valVal1}, we see that 
$\epsilon_{\mathrm{tol}}$ measures the 
defect associated with the approximation $P^N$ on 
$D_s$.  The requirement that $R' < R$ in Equation  
\eqref{eq:valVal2} guarantees that the image of the approximate
parameterization $P^N$
is contained in the disk $D_k(p, R')$, i.e. strictly 
interior to the disk $D_k(p, R)$ on which we have control over 
derivatives.  The image of the true parameterization $P = P^N + H$
will live in the larger disk  $D_k(p, R)$.
Note that there is no assumption that either $R'$ or $R$ are
small.  Equation \eqref{eq:valVal3} postulates a uniform 
bound on the absolute values of the stable eigenvalues, while Equation 
\eqref{eq:valVal4} requires a uniform bound on the inverse 
of the Jacobian derivative of $f$ holding over
all of $D_s$.  Finally Equation \eqref{eq:valVal5} requires a 
uniform bound on the second derivatives of $f$ which is 
valid over all of $D_k(p, R)$.
Interval arithmetic computation of validation values
is discussed in \cite{MR3068557}, and such computations are
 implemented for the Lomel\'{\i} map in the same reference.

The following theorem is the main result of 
\cite{MR3068557}.  Its proof is found in the same reference.

\begin{theorem}[A-posteriori error bounds] \label{thm:validateManifolds}
Given assumptions $A1- A3$, suppose that 
$\epsilon_{\mathrm{tol}}$, $R$, $R'$, $\mu^*$, $K_1$, and $K_2$ are
validation values for $P^N$.  Let
\[
N_f = \# \{ \beta \in \mathbb{N}^k : |\beta| = 2 \quad \mbox{and} \quad
\partial^\beta f_j(q) \neq 0 \quad \mbox{for} \quad 
q \in D_k(p,R), 1 \leq j \leq k \},
\]
count the number of not identically zero partial derivatives of $f$, 
and suppose that $N \in \mathbb{N}$ and $\delta > 0$ satisfy the three
inequalities
\begin{equation}\label{eq:aPosEq1}
N+1 > \frac{-\ln(K_1)}{\ln(\mu^*)},
\end{equation}
\begin{equation}\label{eq:aPosEq2}
\delta < e^{-1} \min\left(
\frac{1 - K_1 (\mu^*)^{N+1}}{2 k \pi N_f K_1 K_2}, R - R'
\right),
\end{equation}
\begin{equation}\label{eq:aPosEq3}
\frac{2 K_1 \epsilon_{\mathrm{tol}}}{1 - K_1 (\mu^*)^{N+1}} < \delta.
\end{equation}
Then there is a unique analytic $N$-tail 
$h \colon D_m \subset \mathbb{C}^m \to \mathbb{C}^k$ having that 
\[
\sup_{z \in D_m} \| h(z)\|_{\mathbb{C}^k} \leq \delta,
\]
and that 
\[
P(z) = P^N(z) + h(z),
\]
is the exact solution of Equation \eqref{eq:parmInvEq}.  
\end{theorem}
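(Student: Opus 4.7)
The plan is to recast the invariance equation for $P = P^N + h$ as a contractive fixed-point equation on the closed ball of analytic $N$-tails of sup-norm at most $\delta$, and apply the Banach contraction principle. I will work in the Banach space $\mathcal{A}$ of continuous maps $\bar D_s \to \mathbb{C}^k$ that are holomorphic on $D_s$, equipped with $\|h\| := \sup_{z \in \bar D_s} \|h(z)\|_{\mathbb{C}^k}$, let $\mathcal{T}^N \subset \mathcal{A}$ be the closed subspace of analytic $N$-tails, and set $B_\delta := \{h \in \mathcal{T}^N : \|h\| \leq \delta\}$. Writing $P = P^N + h$ and Taylor-expanding $f$ about $P^N(z)$,
\[
f(P^N(z) + h(z)) = f(P^N(z)) + Df(P^N(z))\,h(z) + Q(z, h(z)),
\]
where $Q(z, y) := f(P^N(z) + y) - f(P^N(z)) - Df(P^N(z))y$ is the quadratic-and-higher remainder, the invariance equation \eqref{eq:parmInvEq} rearranges to $Df(P^N(z))h(z) - h(\Lambda_s z) = -E(z) - Q(z, h(z))$, with $E(z) := f(P^N(z)) - P^N(\Lambda_s z)$ the defect. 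Inverting $Df(P^N(z))$ produces the candidate operator
\[
T(h)(z) := [Df(P^N(z))]^{-1}\bigl[h(\Lambda_s z) - E(z) - Q(z, h(z))\bigr].
\]

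The key technical estimate to prove first is that for every $h \in \mathcal{T}^N$ one has $\|h \circ \Lambda_s\| \leq (\mu^*)^{N+1}\|h\|$. This follows by a one-variable reduction: for each $u \in \mathbb{C}^s$ with $\|u\|_{\mathbb{C}^s} = 1$ the slice map $t \mapsto h(tu)$ is holomorphic on the closed unit disk in $\mathbb{C}$, bounded by $\|h\|$, and vanishes to order $N+1$ at the origin, so the higher-order Schwarz lemma gives $|h(tu)| \leq \|h\||t|^{N+1}$; writing $\Lambda_s z = tu$ with $t = \|\Lambda_s z\|_{\mathbb{C}^s} \leq \mu^*$ then yields the claim. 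I would also check that $T$ preserves the $N$-tail structure: $E$ is an $N$-tail by assumption A3, $h \circ \Lambda_s$ trivially is, $Q(z, h(z))$ is even a $(2N+1)$-tail since it depends quadratically on $h$, and premultiplication by the holomorphic matrix $[Df(P^N(\cdot))]^{-1}$ (whose value at $0$ is invertible by A2) does not touch the tail order.

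It remains to verify the two hypotheses of the Banach contraction principle on $B_\delta$. Using \eqref{eq:valVal4}, the defect bound \eqref{eq:valVal1}, the key estimate above, and a quadratic bound $\|Q(\cdot, h(\cdot))\| \leq C\,K_2\,\|h\|^2$ obtained by applying multivariate Cauchy estimates to the components of $f$ on $D_k(p, R)$ (the constant $C$ absorbing the counts $k$ and $N_f$, together with a factor $2\pi$ from Cauchy's formula), one obtains
\[
\|T(h)\| \leq K_1\bigl[(\mu^*)^{N+1}\delta + \epsilon_{\mathrm{tol}} + C K_2 \delta^2\bigr].
\]
Condition \eqref{eq:aPosEq2} ensures both $\delta < R - R'$, so that $P^N(z) + h(z) \in D_k(p, R)$ and the bound $K_2$ applies, and $CK_1K_2\delta < (1 - K_1(\mu^*)^{N+1})/2$, which together with \eqref{eq:aPosEq3} forces the right-hand side to be at most $\delta$. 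A parallel computation using $\|Q(z, h_1) - Q(z, h_2)\| \leq 2CK_2\delta\|h_1 - h_2\|$ yields Lipschitz constant $K_1\bigl[(\mu^*)^{N+1} + 2CK_2\delta\bigr]$, which is strictly less than $1$ by conditions \eqref{eq:aPosEq1}--\eqref{eq:aPosEq2}. The Banach contraction principle delivers a unique $h \in B_\delta$ with $T(h) = h$, and unwinding the definition of $T$ shows $P = P^N + h$ solves \eqref{eq:parmInvEq}.

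The main obstacle I anticipate is the precise bookkeeping of the constant $C$ in the quadratic estimate on $Q$: to land on the exact coefficient $2k\pi N_f K_1 K_2$ of \eqref{eq:aPosEq2}, the Cauchy estimates on $D^2 f$ must be organized around the parameter $N_f$ of nontrivial second partials (rather than the worst-case $k^2$), and the estimate on $Q$ should be obtained from the integral form of the Taylor remainder so that only derivatives of $f$, and not of $P^N$, enter the bound. Once this constant is pinned down, the three inequalities \eqref{eq:aPosEq1}--\eqref{eq:aPosEq3} are calibrated exactly to furnish a contractive self-map of $B_\delta$.
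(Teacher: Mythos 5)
The paper states this theorem without proof, deferring entirely to \cite{MR3068557}, so there is no in-text argument to compare against. Your contraction-on-$N$-tails framework is the canonical route for a-posteriori validation in the parameterization method, and the ingredients you identify are the right ones: the fixed-point operator $T(h) = [Df(P^N)]^{-1}\bigl[h\circ\Lambda_s - E - Q(\cdot, h)\bigr]$, the higher-order Schwarz bound $\|h \circ \Lambda_s\| \leq (\mu^*)^{N+1}\|h\|$ for $N$-tails (your one-variable slice reduction through a unit vector $u$ is sound, since $tu \in D_s$ for $|t|<1$ and the slice has a zero of order $N+1$), the observation that each term $T$ produces is again an $N$-tail, and the recognition that \eqref{eq:aPosEq1} controls the linear contraction factor $K_1(\mu^*)^{N+1}$ while \eqref{eq:aPosEq2}--\eqref{eq:aPosEq3} split the self-mapping condition into its quadratic and defect halves. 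All of this is correct and would close the Banach fixed-point argument.

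The one issue to flag is your handling of the constant. The integral form of the Taylor remainder together with the bound $K_2$ on second partials gives $\|Q(z,h(z))\| \leq N_f K_2 \delta^2$ directly, with no $\pi$, no $e$, and no extra factor of $k$. That is a \emph{stronger} estimate than what \eqref{eq:aPosEq2} demands (the stated bound carries the additional factor $e\,k\,\pi$ in the denominator), so your argument does establish the theorem, in fact under slightly weaker hypotheses. But it does not \emph{derive} the literal inequality \eqref{eq:aPosEq2}: the appearance of $e^{-1}$ and $2\pi$ is a strong signal that the proof in \cite{MR3068557} routes the quadratic estimate through Cauchy derivative bounds on a disk shrunk by $e^{-\sigma}$ (exactly the $\frac{2\pi}{\nu\sigma}$ formula recorded in Remark \ref{rem:cauchyBounds}), rather than through the integral remainder as you do. Your suggestion that one should ``apply Cauchy estimates to $D^2 f$'' is a red herring, since $K_2$ already bounds the second derivatives without shrinking any disk; the Cauchy mechanism, if it is what produces the $e$ and $\pi$, must be aimed elsewhere (e.g.\ at first derivatives of something). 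So the proposal is a correct and arguably cleaner proof of the theorem, but recovering the stated constants verbatim would require following the bookkeeping of \cite{MR3068557} rather than reconstructing it from the remainder formula.
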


The theorem provides explicit conditions, all checkable by finite
computations using interval arithmetic, sufficient to insure that there
is an analytic $N$-tail so that $P^N + h$ solve the invariance equation
for the Parameterization Method.  In this case $h$ is the truncation error 
on $D_s$ associated with stopping our Taylor approximation at $N$-th order. 
$\delta$ then provides an explicit bound on the truncation error.
Note also that the size of $\delta$ is related to the a-posteriori error 
$\epsilon_{\mathrm{tol}}$ times the quantity $K_1$, which in a sense 
measures how far from singular $Df$ is on the image of $P^N$. 
The theorem also gives an indication of how large the order of 
approximation $N$ must be taken.

In practice once validation values for $P^N$ are found one then 
checks that $N$ satisfies the condition given by 
Equation \eqref{eq:aPosEq1},  computes a bound $c_1$ on the quantity 
given in the right hand side of Equation \eqref{eq:aPosEq2}, 
computes a bound $c_2$ on the quantity given in the left hand 
side of Equation \eqref{eq:aPosEq3}, and then checks that 
$c_1 > c_2$.  Each of these computations and checks is done using 
interval arithmetic.  If this procedure succeeds then the theorem 
holds for any $\delta \in (c_2, c_1)$.  Of course in practice we then 
take $\delta$ as small as possible, i.e. very close to $c_2$.
Again these matters are discussed in more detail in \cite{MR3068557}.

\begin{remark}[Cauchy bounds on the derivative]\label{rem:cauchyBounds}
{\em 
Since the truncation error $h$ is an analytic function bounded by $\delta$
on the fixed disk $D_s$, we can bound derivatives of $h$ on any smaller 
disk using classical estimates of complex analysis.  
Indeed, let $f \colon D_m(\nu) \to \mathbb{C}^k$ be an analytic function with 
\[
\sup_{z \in D_m(\nu)} \| f(z)\| \leq M.
\]
Then for any $0 < \sigma \leq 1$ the Cauchy bounds
\[
\sup_{z \in D_m(\nu e^{-\sigma})}
 \left\| \frac{\partial}{\partial z_j} f(z)\right\| \leq
 \frac{2 \pi}{\nu \sigma} M,
\]
translate bounds on the size of a function into bounds on the 
size of its derivative on a strictly smaller disk. 
An elementary proof of the Cauchy bounds can be found for 
example in \cite{MR3068557}.
Repeated application
of the Cauchy bounds leads to estimates of $j$-th order derivatives 
inverse proportional to $(\nu \sigma)^j$.

Now suppose that $P^N$ and $h, P \colon D_s \to \mathbb{C}^k$ are 
as in Theorem  \ref{thm:validateManifolds}, 
so that $P(z) = P^N(z) + h(z)$ parameterizes a local stable 
manifold for $p$, and $\| h \| \leq \delta$ for 
$z \in D_s$.  Then for $0 < \sigma \leq 1$, 
$z \in D_s$, and $1 \leq j \leq s$ we have that 
\[
\frac{\partial}{\partial z_j} P(z) = \frac{\partial}{\partial z_j}P^N(z)
+ \frac{\partial}{\partial z_j} h(z).
\]
In practice the first term on the right is computed explicitly 
as the partial derivative of the known polynomial $P^N$,
while the Cauchy bounds applied to the second term on 
the right yield that
\[
\sup_{z \in D_s(e^{-\sigma})}
 \left\| \frac{\partial}{\partial z_j} h(z)\right\| \leq
 \frac{2 \pi}{\sigma} \delta.
\]
This decomposition is used
in order to control derivatives of the truncation errors
of the Parameterizations in the heteroclinic arc computations
to follow.  We just have to make sure that our heteroclinic
arcs are contained in the interior of the domain $D_s$.
}
\end{remark}

\subsection{Example stable/unstable manifold computations}

\begin{figure}[h] 
\begin{center}
\scalebox{0.35}{\includegraphics{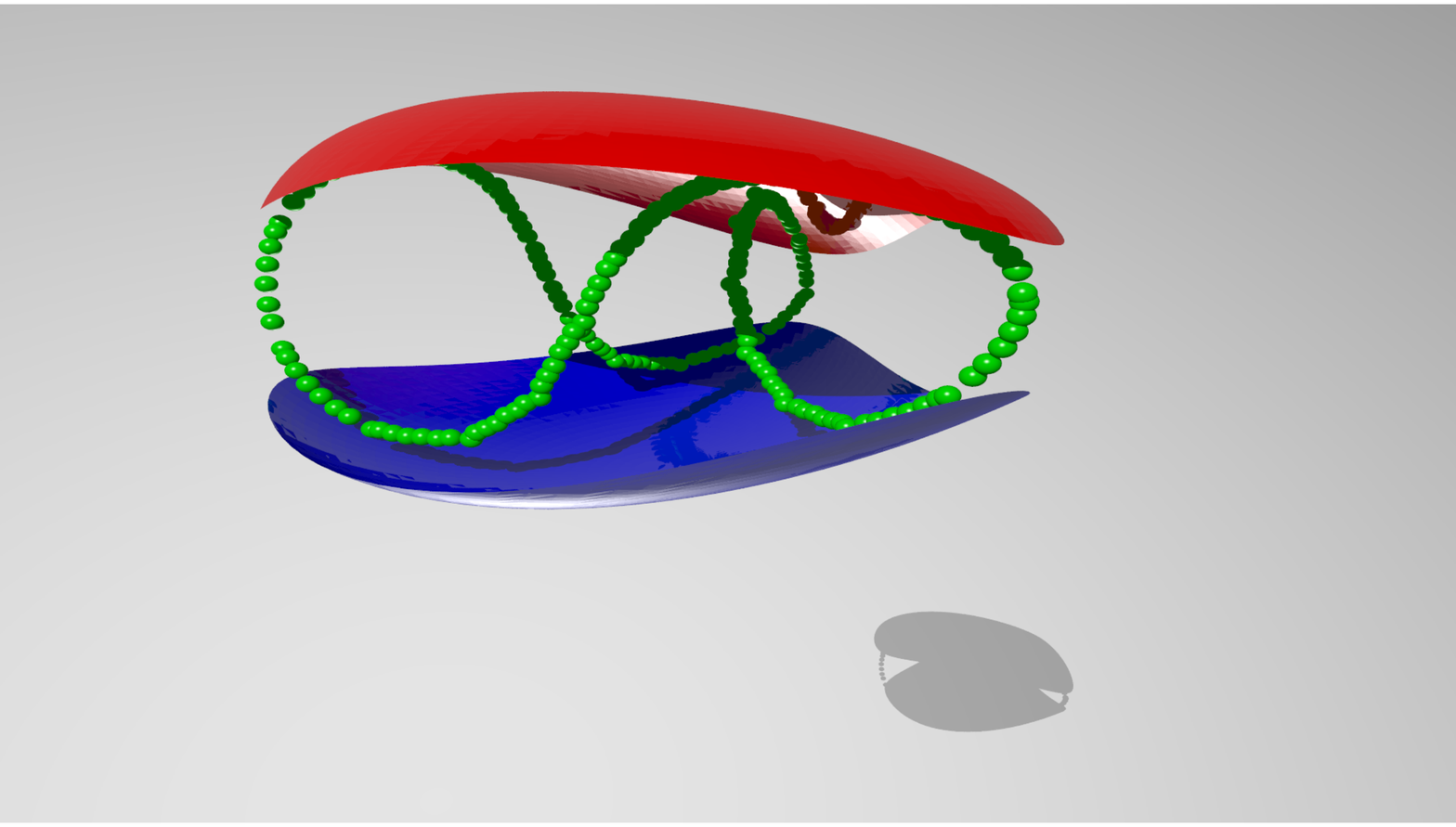}}
\end{center}
\caption{
Local stable/unstable manifolds for the Lomel\'{\i} map: 
images of polynomial parameterizations for the system with 
parameter values $a = 0.44$, $b = 0.21$,
$c = 0.35$, $\alpha = -0.25$, and $\tau = -0.3$, i.e. the 
same parameters used to generate Figure \ref{fig:LomeliMap1}.  
Local unstable manifold shown in blue and local 
stable manifold in red.
}\label{fig:bubbleInvArcs}
\end{figure}

\begin{figure}[h] 
\begin{center}
\scalebox{0.35}{\includegraphics{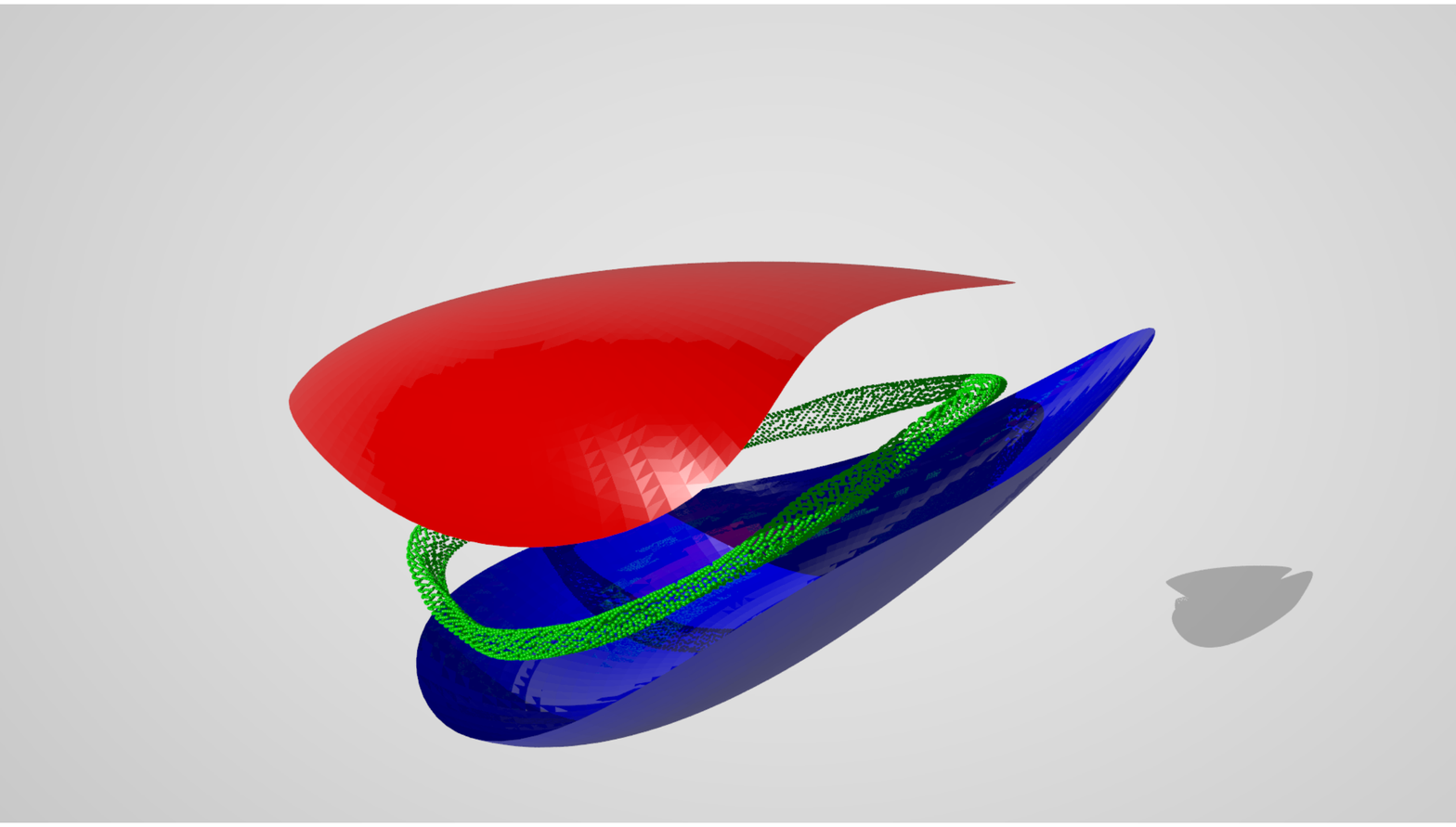}}
\end{center}
\caption{
Local stable/unstable manifolds for the Lomel\'{\i} map: 
images of polynomial parameterizations for the system with 
parameter values $a = 0.5$, $b = -0.5$,
$c = 1$, $\alpha = -0.08999$, and $\tau =  0.8$, i.e. the 
same parameters used to generate Figure \ref{fig:LomeliMap2}.  
Local unstable manifold shown in blue and local 
stable manifold in red.
} \label{fig:bubbleTorus}
\end{figure}

Figures \ref{fig:bubbleInvArcs} and \ref{fig:bubbleTorus}
illustrate the results of several computations for the Lomel\'{\i} map
utilizing the Parameterization Method.  Namely we compute
polynomial approximations of the manifolds to order $N = 45$
for the two dimensional local unstable and stable manifolds
at $p_1$ and $p_2$ respectively.  The figures result from computing
a triangular mesh in the domains of the parameterizations, and lifting
the mesh to the phase space using the polynomial chart maps.  
Moreover, the global stable/unstable manifolds shown in 
 Figure \ref{fig:LomeliMap1} are obtained from 
 the local manifolds shown in Figure \ref{fig:bubbleInvArcs} 
 after $30$ iterates.
Similarly, the global stable/unstable manifolds shown in 
 Figure \ref{fig:LomeliMap2} are obtained from 
 the local manifolds shown in Figure \ref{fig:bubbleTorus} 
 after $10$ iterates.

By verifying the hypotheses of Theorem \ref{thm:validateManifolds}
we obtain a-posteriori error bounds on the truncation error 
associated with the parameterizations.  In all cases the supremum 
norm errors are confirmed to be smaller than $10^{-9}$.
These validated parameterizations are used in the sequel in order to 
study the heteroclinic arcs for the bubbles.

\begin{remark}[Software and performance] \label{rem:parmComps}{\em
The parameterizations of all local stable and unstable manifolds used in the
present work are computed and validated using the INTLAB library for interval 
arithmetic running under MatLab \cite{intlabCitation}.  Performance and 
implementation of the validated computations is discussed in detail in
\cite{MR3068557}.
}
\end{remark}

\begin{remark}[Dynamics inside the bubble]\label{rem:bubbleInnerDynamics}
{\em
For the reader interested in the vortex bubble dynamics of the 
Lomel\'{\i} map we have included some additional dynamical 
information in Figures \ref{fig:bubbleInvArcs} and \ref{fig:bubbleTorus}.
In addition to plotting the parameterized local stable/unstable manifolds
we also considered a $500 \times 500 \times 500$ box of initial 
conditions in the ``bubble region''.  For the parameter values studied
in Figure \ref{fig:bubbleInvArcs} we find that a typical 
orbit escapes the region (and diverges to infinity).  
The green trajectory in Figure \ref{fig:bubbleInvArcs} was found by considering 
only orbits which stay in the bubble region for more than 500 
iterates.  We conjecture that there is an unstable invariant 
circle near this green orbit. 
 
For the parameter values studied in Figure \ref{fig:bubbleTorus}
typical orbits in the ``bubble region'' are invariant 
tori, chaotic orbits, or orbits which escape the region all together.
For example we plot as a 
green set in Figure  \ref{fig:bubbleTorus} an orbit which appears to lie
on an invariant torus. 
}  
\end{remark}

\section{Intersections of stable/unstable manifolds}

\label{sec:intersections}

In this section we discuss how to establish intersections of stable/unstable
manifolds of hyperbolic fixed points.

Let $f:\mathbb{R}^{k}\rightarrow\mathbb{R}^{k}$ be an invertible map and
$p_{1},p_{2}$ its hyperbolic fixed points with associated stable manifolds
$W^{s}(p_{i})$ and unstable manifolds $W^{u}(p_{i}),$ for $i=1,2.$ We assume
that $W^{u}(p_{1})$ is of dimension $u_{1}$ and that $W^{s}(p_{2})$ is of
dimension $s_{2}$, with $m=u_{1}+s_{2}>k$. Our objective is to investigate the
intersection of $W^{u}(p_{1})$ with $W^{s}(p_{2}).$ We shall formulate
conditions which ensure that they (locally) intersect transversally along an
$m-k$ manifold in $\mathbb{R}^{k}$.

\begin{remark}
In the setting of the Lomel\'{\i} map, we will have $k=3$, $u_{1}=s_{2}=2$ and
so the manifolds will intersect along $u_{1}+u_{2}-k=1$ dimensional curves. We
write our method in the more general context, to emphasize that it is
applicable also in higher dimensions.
\end{remark}

We assume that the manifolds $W^{u}(p_{1})$ and $W^{s}(p_{2})$ are
parameterized by
\[
P_{1}:B_{u_{1}}\rightarrow\mathbb{R}^{k},\qquad P_{2}:B_{s_{2}}\rightarrow
\mathbb{R}^{k},
\]
where $B_{u_{1}}$ and $B_{s_{2}}$ are used to denote balls centered at zero in
$\mathbb{R}^{u_{1}}$ and $\mathbb{R}^{s_{2}}$, respectively. We shall write
$\theta$ for coordinates on $\mathbb{R}^{u_{1}}$ and $\phi$ for coordinates on
$\mathbb{R}^{s_{2}}$.

Let
\begin{align}
F  &  :B_{u_{1}}\times B_{s_{2}}\rightarrow\mathbb{R}^{k},\nonumber\\
F(\theta,\phi)  &  =f^{l_{1}}(P_{1}(\theta))-f^{-l_{2}}\left(  P_{2}%
(\phi)\right)  , \label{eq:F-def}%
\end{align}
for some $l_{1},l_{2}\in\mathbb{N}$. We shall look for points $p^{\ast}\in
B_{u}\times B_{s}$ for which we will have
\begin{equation}
F(p^{\ast})=0. \label{eq:Fp-star-zero-intro}%
\end{equation}

A point $p^{\ast}$ satisfying (\ref{eq:Fp-star-zero-intro}) gives a point of
intersection of $W^{u}\left(  p_{1}\right)  $ and $W^{s}(p_{2})$ in the phase
space as $P_{1}\left(  \pi_{\theta}p^{\ast}\right)  $, or $P_{2}\left(
\pi_{\phi}p^{\ast}\right)  $. The two points come from the same homoclinic
orbit, i.e.
\[
f^{l_{1}+l_{2}}\left(  P_{1}\left(  \pi_{\theta}p^{\ast}\right)  \right)
=P_{2}\left(  \pi_{\phi}p^{\ast}\right)  .
\]

We see that finding intersections of $W^{u}\left(  p_{1}\right)  $ and
$W^{s}(p_{2})$ reduces to finding zeros of $F$. In section
\ref{sec:inter-general} we address this problem in general context, and then
apply the method to the Lomel\'{\i} map in section \ref{sec:Lom-application}.

\subsection{General setup\label{sec:inter-general}}

Let us consider a function
\[
F:\mathbb{R}^{m}\rightarrow\mathbb{R}^{k},
\]
where $m>k$. In this section we present an interval Newton type method for
establishing estimates on the set
\begin{equation}
\Sigma_{0}:=\left\{  F=0\right\}  . \label{eq:Fzero}%
\end{equation}
If $F$ is $C^{1}$ then we can expect $\Sigma_{0}$ to be a $C^{1}$ manifold of
dimension $m-k$. Our method will work in such setting.

Consider $\mathrm{x}\in\mathbb{R}^{m-k}$ and define a function $F_{\mathrm{x}%
}:\mathbb{R}^{k}\rightarrow\mathbb{R}^{k}$ as
\begin{equation}
F_{\mathrm{x}}\left(  \mathrm{y}\right)  :=F\left(  \mathrm{x},\mathrm{y}%
\right)  . \label{eq:F-kappa-def}%
\end{equation}
For $X\subset\mathbb{R}^{m-k}$ and $Y\subset\mathbb{R}^{k}$, by $DF_{X}\left(
Y\right)  $ we denote the family of matrixes
\[
DF_{X}\left(  Y\right)  =\left\{  D\left(  F_{\mathrm{x}}\right)  \left(
\mathrm{y}\right)  :\mathrm{x}\in X,\mathrm{y}\in Y\right\}  .
\]
Bounds on (\ref{eq:Fzero}) can be obtained by using the interval Newton
method. Below theorem is a well known modification (see for instance \cite[p.
376]{MR2652784}) of the method, that includes a parameter.

\begin{theorem}
\label{thm:Newton-enclosure-simple}Let $X=\Pi_{i=1}^{m-k}\left[  a_{i}%
,b_{i}\right]  \subset\mathbb{R}^{m-k}$ and $Y=\Pi_{i=1}^{k}\left[
c_{i},d_{i}\right]  \subset\mathbb{R}^{k}$. Consider $\mathrm{y}_{0}%
\in\mathrm{int}Y$ and%
\[
N\left(  \mathrm{y}_{0},X,Y\right)  =\mathrm{y}_{0}-\left[  DF_{X}\left(
Y\right)  \right]  ^{-1}\left[  F_{X}\left(  \mathrm{y}_{0}\right)  \right]
.
\]
If
\begin{equation}
N\left(  \mathrm{y}_{0},X,Y\right)  \subset\mathrm{int}Y,
\label{eq:Newton-direct}%
\end{equation}
then there exists a function $q:X\rightarrow Y$ such that $F\left(
\mathrm{x},q\left(  \mathrm{x}\right)  \right)  =0.$
\end{theorem}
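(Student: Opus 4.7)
The plan is to apply the classical (parameter-free) interval Newton theorem pointwise in the parameter $\mathrm{x}$ and to assemble the resulting solutions into the function $q$. First I would fix $\mathrm{x}\in X$ arbitrarily. Directly from the definitions one has $F_{\mathrm{x}}(\mathrm{y}_{0})\in [F_{X}(\mathrm{y}_{0})]$ and $[DF_{\mathrm{x}}(Y)]\subset [DF_{X}(Y)]$, so the hypothesis $N(\mathrm{y}_{0},X,Y)\subset \mathrm{int}\,Y$ gives
\[
\mathrm{y}_{0}-[DF_{\mathrm{x}}(Y)]^{-1}F_{\mathrm{x}}(\mathrm{y}_{0}) \subset N(\mathrm{y}_{0},X,Y) \subset \mathrm{int}\,Y.
\]
In particular every matrix in $[DF_{X}(Y)]$, and hence in $[DF_{\mathrm{x}}(Y)]$, must be invertible (otherwise $[DF_{X}(Y)]^{-1}$ is undefined and the statement is vacuous).

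Next I would establish existence of a zero of $F_{\mathrm{x}}$ in $Y$ by a Brouwer fixed point argument. For $\mathrm{y}\in Y$ set
\[
A(\mathrm{y}) := \int_{0}^{1} DF_{\mathrm{x}}\bigl(\mathrm{y}_{0}+t(\mathrm{y}-\mathrm{y}_{0})\bigr)\,dt,
\]
which lies in $[DF_{\mathrm{x}}(Y)]$ by convexity of the interval hull. The fundamental theorem of calculus gives $F_{\mathrm{x}}(\mathrm{y})=F_{\mathrm{x}}(\mathrm{y}_{0})+A(\mathrm{y})(\mathrm{y}-\mathrm{y}_{0})$, so the Newton-type operator $T\colon Y\rightarrow \mathbb{R}^{k}$ defined by
\[
T(\mathrm{y}) := \mathrm{y}-A(\mathrm{y})^{-1}F_{\mathrm{x}}(\mathrm{y}) = \mathrm{y}_{0}-A(\mathrm{y})^{-1}F_{\mathrm{x}}(\mathrm{y}_{0})
\]
has $T(\mathrm{y})\in \mathrm{y}_{0}-[DF_{\mathrm{x}}(Y)]^{-1}F_{\mathrm{x}}(\mathrm{y}_{0})\subset \mathrm{int}\,Y$ for every $\mathrm{y}\in Y$. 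Continuity of $T$ follows from continuity of $DF_{\mathrm{x}}$ together with the invertibility of every element of $[DF_{X}(Y)]$, which makes $\mathrm{y}\mapsto A(\mathrm{y})^{-1}$ continuous. Brouwer's theorem then supplies a fixed point $\mathrm{y}^{*}\in Y$, and invertibility of $A(\mathrm{y}^{*})$ forces $F(\mathrm{x},\mathrm{y}^{*})=F_{\mathrm{x}}(\mathrm{y}^{*})=0$.

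Finally I would check that $\mathrm{y}^{*}$ is uniquely determined by $\mathrm{x}$, so that $q(\mathrm{x}):=\mathrm{y}^{*}$ is well defined. If $\mathrm{y}_{1},\mathrm{y}_{2}\in Y$ both satisfy $F_{\mathrm{x}}(\mathrm{y}_{i})=0$, then the same integral mean value identity applied along the segment from $\mathrm{y}_{2}$ to $\mathrm{y}_{1}$ produces a matrix $B\in[DF_{\mathrm{x}}(Y)]$ with $0=F_{\mathrm{x}}(\mathrm{y}_{1})-F_{\mathrm{x}}(\mathrm{y}_{2})=B(\mathrm{y}_{1}-\mathrm{y}_{2})$, and invertibility of $B$ forces $\mathrm{y}_{1}=\mathrm{y}_{2}$. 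Letting $\mathrm{x}$ range over $X$ yields the desired function $q\colon X\rightarrow Y$ with $F(\mathrm{x},q(\mathrm{x}))=0$. The main obstacle is verifying that $T$ actually maps $Y$ into itself; this is precisely what the interval-arithmetic hypothesis $N(\mathrm{y}_{0},X,Y)\subset \mathrm{int}\,Y$ is engineered to deliver, and once the inclusion is secured the remainder reduces to a standard Brouwer argument. Note that no continuity or regularity of $q$ is asserted by the theorem and none needs to be proved, which is what keeps the argument elementary.
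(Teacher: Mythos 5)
Your proof is correct. The paper does not actually prove this theorem itself -- it cites it as ``a well known modification (see for instance \cite[p. 376]{MR2652784}) of the method, that includes a parameter'' -- so there is no official proof to compare against; but your argument is the standard one. You reduce to a pointwise application of the classical interval Newton theorem by fixing $\mathrm{x}\in X$, and then you unpack that classical theorem via the integral mean-value form $F_{\mathrm{x}}(\mathrm{y})=F_{\mathrm{x}}(\mathrm{y}_{0})+A(\mathrm{y})(\mathrm{y}-\mathrm{y}_{0})$ with $A(\mathrm{y})\in[DF_{X}(Y)]$ by convexity, a Brouwer fixed-point argument on $T(\mathrm{y})=\mathrm{y}_{0}-A(\mathrm{y})^{-1}F_{\mathrm{x}}(\mathrm{y}_{0})$, and an invertibility argument for uniqueness of $q(\mathrm{x})$. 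This is exactly the route the paper implicitly relies on: in the proof of its Theorem \ref{th:directed-Newton} it reduces to the same pointwise statement by invoking ``the interval Newton theorem'' for each fixed $\mathrm{x}$. All the steps check out: $T$ maps $Y$ into $\mathrm{int}\,Y$ by the hypothesis $N(\mathrm{y}_{0},X,Y)\subset\mathrm{int}\,Y$; continuity of $\mathrm{y}\mapsto A(\mathrm{y})^{-1}$ holds because $[DF_{X}(Y)]$ is a compact interval matrix consisting of invertible matrices (implicit in the notation $[DF_{X}(Y)]^{-1}$); and the fixed-point equation forces $F_{\mathrm{x}}(\mathrm{y}^{*})=0$. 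You even get uniqueness of $q(\mathrm{x})$ for free, which the theorem statement does not demand but which the paper's subsequent remark (smoothness of $q$ via the implicit function theorem) tacitly uses.
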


\begin{remark}
By the implicit function theorem, $q\left(  \mathrm{x}\right)  $ is as smooth
as $F$.
\end{remark}

\begin{remark}
\label{rem:zeros-pts} If we choose $X$ to be a single point $X=\{\mathrm{x}_{0}\}%
$, then we can use Theorem \ref{thm:Newton-enclosure-simple} to establish an
enclosure $\left\{  \mathrm{x}_{0}\right\}  \times Y$ of the point $\left(
\mathrm{x}_{0},q\left(  \mathrm{x}_{0}\right)  \right)  $, for which $F\left(
\mathrm{x}_{0},q\left(  \mathrm{x}_{0}\right)  \right)  =0.$
\end{remark}

\begin{remark}
In Theorem \ref{thm:Newton-enclosure-simple} we have fixed $\mathrm{x}$ to be
the first $k$ coordinates. We can also apply the method by fixing any other
$k$ of the $m$ coordinates.
\end{remark}

For $X,Y$ from Theorem \ref{thm:Newton-enclosure-simple}, the set $X\times Y$
is an enclosure of $\Sigma_{0}.$ The theorem establishes the smoothness of
$\Sigma_{0}$ and proves that it is a graph over the $X$ coordinate. This
approach to obtaining an enclosure is simple and direct, but has one major
flaw. The main issue is that the bound on $F_{X}\left(  \mathrm{y}_{0}\right)
=F\left(  X,\mathrm{y}_{0}\right)  $ might not be tight, and in such case the
application of the method would require a choice of very small $X$. This in
practice could result in needing a vast number of sets to fully enclose
$\Sigma_{0}$. A natural remedy for keeping the enclosure of $F_{X}\left(
\mathrm{y}_{0}\right)  $ in check would be a more careful choice of local
coordinates. This is what motivates our next approach.

Assume that $p^{\ast}$ is a point for which we have
\begin{equation}
F(p^{\ast})=0. \label{eq:Fp-star-zero}%
\end{equation}
We will consider a neighborhood of $p^{\ast}$, in which we want to locally
enclose $\Sigma_{0}.$ Let $A_{1}$ be a $m\times(m-k)$ matrix and let $A_{2}$
be a $m\times k$ matrix. We will be looking for points of the form%
\[
p=p(\mathrm{x},\mathrm{y}):=p^{\ast}+A_{1}\mathrm{x}+A_{2}\mathrm{y}%
\]
for which
\begin{equation}
F\left(  p\left(  \mathrm{x},\mathrm{y}\right)  \right)  =0.
\label{eq:Fp-kappa-q-zero}%
\end{equation}

We will first formulate an interval Newton-type theorem that will allow us to
establish bounds on $\mathrm{x},\mathrm{y}$ solving (\ref{eq:Fp-kappa-q-zero}%
). Later we will follow with comments on how $A_{1}$ and $A_{2}$ should be
chosen and why the proposed approach can provide better estimates than Theorem
\ref{thm:Newton-enclosure-simple}.

\begin{theorem}
\label{th:directed-Newton} Let $A_{1}$ be an $m\times(m-k)$ matrix and let
$A_{2}$ be a $m\times k$ matrix. Let $X=\Pi_{i=1}^{m-k}\left[  a_{i}%
,b_{i}\right]  ,$ $Y=\Pi_{i=1}^{k}\left[  c_{i},d_{i}\right]  ,$
$\mathrm{x}_{0}\in X,$ $\mathrm{y}_{0}\in Y$ and let us introduce the
following notations%
\begin{align}
F_{\mathrm{x}_{0},\mathrm{y}_{0}}  &  :=F\left(  p^{\ast}+A_{1}\mathrm{x}%
_{0}+A_{2}\mathrm{y}_{0}\right)  ,\nonumber\\
DF_{X,\mathrm{y}_{0}}  &  :=DF\left(  p^{\ast}+A_{1}X+A_{2}\mathrm{y}%
_{0}\right)  ,\nonumber\\
DF_{X,Y}  &  :=DF\left(  p^{\ast}+A_{1}X+A_{2}Y\right)  ,\nonumber\\
N\left(  \mathrm{x}_{0},\mathrm{y}_{0},A_{1},A_{2},X,Y\right)   &
:=\mathrm{y}_{0}-\left[  DF_{X,Y}A_{2}\right]  ^{-1}\left(  F_{\mathrm{x}%
_{0},\mathrm{y}_{0}}+\left[  DF_{X,\mathrm{y}_{0}}A_{1}\right]  \left[
X-\mathrm{x}_{0}\right]  \right)  . \label{eq:Newton-operator}%
\end{align}
If
\[
N\left(  \mathrm{x}_{0},\mathrm{y}_{0},A_{1},A_{2},X,Y\right)  \subset Y,
\]
then there exists a function $q:X\rightarrow Y,$ such that%
\begin{equation}
F\left(  p^{\ast}+A_{1}\mathrm{x}+A_{2}q(\mathrm{x})\right)  =0.
\label{eq:zero-with-q}%
\end{equation}
Moreover, $q$ is as smooth as $F$.
\end{theorem}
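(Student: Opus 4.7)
The plan is to reduce Theorem \ref{th:directed-Newton} to the standard (non-parametric) interval Newton method by fixing $\mathrm{x}\in X$ and treating the resulting $k$-variable system in $\mathrm{y}$, then extracting $q(\mathrm{x})$ from the pointwise solutions. For each $\mathrm{x}\in X$ define
\[
G_{\mathrm{x}}(\mathrm{y}):=F\bigl(p^{\ast}+A_{1}\mathrm{x}+A_{2}\mathrm{y}\bigr),
\]
so that $G_{\mathrm{x}}:\mathbb{R}^{k}\rightarrow\mathbb{R}^{k}$ is a square system with derivative $DG_{\mathrm{x}}(\mathrm{y})=DF(p^{\ast}+A_{1}\mathrm{x}+A_{2}\mathrm{y})A_{2}$. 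For $(\mathrm{x},\mathrm{y})\in X\times Y$ this matrix is an element of the family $[DF_{X,Y}A_{2}]$, which is invertible by hypothesis (the operator $N$ presumes existence of $[DF_{X,Y}A_{2}]^{-1}$); in particular $DG_{\mathrm{x}}(\mathrm{y})$ itself is invertible everywhere on $X\times Y$.

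Next I would show that the standard interval Newton operator for $G_{\mathrm{x}}$ at $\mathrm{y}_{0}$,
\[
N_{\mathrm{x}}(\mathrm{y}_{0})=\mathrm{y}_{0}-[DG_{\mathrm{x}}(Y)]^{-1}G_{\mathrm{x}}(\mathrm{y}_{0}),
\]
is enclosed by the parametric operator $N(\mathrm{x}_{0},\mathrm{y}_{0},A_{1},A_{2},X,Y)$ of (\ref{eq:Newton-operator}). Two inclusions are needed. The first is trivial: since $\{DG_{\mathrm{x}}(\mathrm{y}):\mathrm{y}\in Y\}\subset[DF_{X,Y}A_{2}]$, we have $[DG_{\mathrm{x}}(Y)]^{-1}\subset[DF_{X,Y}A_{2}]^{-1}$. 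The second is the key mean value inclusion in the $\mathrm{x}$ variable: by the fundamental theorem of calculus applied to $t\mapsto F(p^{\ast}+A_{1}(\mathrm{x}_{0}+t(\mathrm{x}-\mathrm{x}_{0}))+A_{2}\mathrm{y}_{0})$,
\[
G_{\mathrm{x}}(\mathrm{y}_{0})-F_{\mathrm{x}_{0},\mathrm{y}_{0}}=\left(\int_{0}^{1}DF(p^{\ast}+A_{1}(\mathrm{x}_{0}+t(\mathrm{x}-\mathrm{x}_{0}))+A_{2}\mathrm{y}_{0})\,dt\right)A_{1}(\mathrm{x}-\mathrm{x}_{0}),
\]
so $G_{\mathrm{x}}(\mathrm{y}_{0})\in F_{\mathrm{x}_{0},\mathrm{y}_{0}}+[DF_{X,\mathrm{y}_{0}}A_{1}][X-\mathrm{x}_{0}]$. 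Combining the two inclusions gives $N_{\mathrm{x}}(\mathrm{y}_{0})\subset N(\mathrm{x}_{0},\mathrm{y}_{0},A_{1},A_{2},X,Y)\subset Y$ for every $\mathrm{x}\in X$.

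With this pointwise containment in hand, the standard interval Newton theorem (Theorem \ref{thm:Newton-enclosure-simple} applied to $G_{\mathrm{x}}$ with parameter set reduced to a point) yields, for each $\mathrm{x}\in X$, a unique $\mathrm{y}\in Y$ with $G_{\mathrm{x}}(\mathrm{y})=0$. Call it $q(\mathrm{x})$; uniqueness in $Y$ makes $q:X\rightarrow Y$ well defined and (\ref{eq:zero-with-q}) is immediate. Smoothness of $q$ follows from the implicit function theorem: at each $(\mathrm{x},q(\mathrm{x}))$ the Jacobian of $\mathrm{y}\mapsto F(p^{\ast}+A_{1}\mathrm{x}+A_{2}\mathrm{y})$ is $DF(p)A_{2}\in[DF_{X,Y}A_{2}]$, already shown invertible, so $q$ inherits the regularity of $F$.

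The main obstacle — or rather, the delicate step — is the mean value inclusion that produces the $[DF_{X,\mathrm{y}_{0}}A_{1}][X-\mathrm{x}_{0}]$ correction term. It is tempting to try to bound $G_{\mathrm{x}}(\mathrm{y}_{0})$ by the cruder enclosure $F([p^{\ast}+A_{1}X+A_{2}\mathrm{y}_{0}])$, but this is exactly the wrapping-inducing expression the theorem is designed to avoid; the linear correction keeps the enclosure tight and is what makes the method useful in the composition-heavy setting of Section \ref{sec:Lom-application}. Everything else is bookkeeping of interval inclusions and an application of a theorem already available in the paper.
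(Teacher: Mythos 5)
Your proposal is correct and follows essentially the same route as the paper's proof: define the slice map $g_{\mathrm{x}}(\mathrm{y})=F(p^{\ast}+A_{1}\mathrm{x}+A_{2}\mathrm{y})$, use the mean value theorem in $\mathrm{x}$ to show $g_{\mathrm{x}}(\mathrm{y}_{0})\in F_{\mathrm{x}_{0},\mathrm{y}_{0}}+[DF_{X,\mathrm{y}_{0}}A_{1}][X-\mathrm{x}_{0}]$, enclose $[Dg_{\mathrm{x}}(Y)]$ in $[DF_{X,Y}A_{2}]$, apply the interval Newton theorem pointwise in $\mathrm{x}$, and invoke the implicit function theorem for smoothness. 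The only cosmetic difference is that you make the mean value inclusion explicit via the fundamental theorem of calculus and you cite Theorem \ref{thm:Newton-enclosure-simple} with $X$ a singleton where the paper just says ``the interval Newton theorem,'' but the argument is the same.
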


\begin{proof}
Let us introduce the following notation. Let $g_{\mathrm{x}}:Y\rightarrow
\mathbb{R}^{k}$ and $h:X\rightarrow\mathbb{R}^{k}$ be defined as%
\begin{align*}
g_{\mathrm{x}}\left(  \mathrm{y}\right)   &  =F\left(  p^{\ast}+A_{1}%
\mathrm{x}+A_{2}\mathrm{y}\right)  ,\\
h\left(  \mathrm{x}\right)   &  =F\left(  p^{\ast}+A_{1}\mathrm{x}%
+A_{2}\mathrm{y}_{0}\right)  .
\end{align*}
By the mean value theorem, for any $\mathrm{x}\in X$%
\begin{align*}
g_{\mathrm{x}}(\mathrm{y}_{0})  &  =h\left(  \mathrm{x}\right) \\
&  \in h\left(  \mathrm{x}_{0}\right)  +\left[  Dh\left(  X\right)  \right]
\left[  X-\mathrm{x}_{0}\right] \\
&  =F_{\mathrm{x}_{0},\mathrm{y}_{0}}+\left[  DF_{X,\mathrm{y}_{0}}%
A_{1}\right]  \left[  X-\mathrm{x}_{0}\right]  .
\end{align*}
Also for any $\mathrm{x}\in X$,%
\[
\left[  Dg_{\mathrm{x}}\left(  Y\right)  \right]  =\left[  DF\left(  p^{\ast
}+A_{1}\mathrm{x}+A_{2}Y\right)  A_{2}\right]  \subset\left[  DF_{X,Y}%
A_{2}\right]  .
\]
This means that for any $\mathrm{x}\in X$%
\[
\mathrm{y}_{0}-\left[  Dg_{\mathrm{x}}\left(  Y\right)  \right]  ^{-1}\left[
g_{\mathrm{x}}\left(  \mathrm{y}_{0}\right)  \right]  \subset N\left(
\mathrm{x}_{0},\mathrm{y}_{0},A_{1},A_{2},X,Y\right)  \subset Y,
\]
hence by the interval Newton theorem for every $\mathrm{x}\in X$ we have
$q\left(  \mathrm{x}\right)  $ for which $g_{\mathrm{x}}(q\left(
\mathrm{x}\right)  )=0$. This means that%
\[
F\left(  p^{\ast}+A_{1}\mathrm{x}+A_{2}q\left(  \mathrm{x}\right)  \right)
=g_{\mathrm{x}}(q\left(  \mathrm{x}\right)  )=0.
\]

To prove that $q\left(  \mathrm{x}\right)  $ is smooth, consider $g:X\times
Y\rightarrow\mathbb{R}^{m}$ defined as%
\[
g(\mathrm{x},\mathrm{y})=F\left(  p^{\ast}+A_{1}\mathrm{x}+A_{2}%
\mathrm{y}\right)  .
\]
Since $g(\mathrm{x},\mathrm{y})=g_{\mathrm{x}}(\mathrm{y})$ we see that
$g(\mathrm{x},q\left(  \mathrm{x}\right)  )=0.$ This means that in order to
prove that $q\left(  \mathrm{x}\right)  $ is smooth it is enough to show that
for any $\mathrm{x}\in X$ and $\mathrm{y}\in Y$ the matrix $\frac{\partial
g}{\partial\mathrm{y}}\left(  \mathrm{x},\mathrm{y}\right)  $ is invertible
(smoothness then follows from the implicit function theorem). Since
$\frac{\partial g}{\partial\mathrm{y}}=Dg_{\mathrm{x}}\in\left[  DF_{X,Y}%
A_{2}\right]  $, we see that the matrix must be invertible, since for
$N\left(  \mathrm{x}_{0},\mathrm{y}_{0},A_{1},A_{2},X,Y\right)  $ to be well
defined we have implicitly assumed that any matrix in $\left[  DF_{X,Y}%
A_{2}\right]  $ is invertible.
\end{proof}

Now we comment on the choice of $A_{1}$, $A_{2}$ and discuss why Theorem
\ref{th:directed-Newton} is better than Theorem
\ref{thm:Newton-enclosure-simple}.

\begin{remark}
\label{rem:A1_choice}When $\Sigma_{0}$ is a $C^{1}$ manifold of dimension
$m-k$, then since $F(\Sigma_{0})=0$, we see that for a point $p\in\Sigma_{0}$
the tangent space $T_{p}\Sigma_{0}$ to $\Sigma_{0}$ at $p$ is an $m-k$
dimensional space, which lies in the kernel of $DF\left(  p\right)  $.
We can take $A_{1}$ whose columns consist of vectors which span $T_{p}\Sigma_{0}$.  The image of $A_{1}$ is then $T_{p}%
\Sigma_{0}$, and for any $v\in\mathbb{R}^{m-k}$, $DF\left(  p\right)
A_{1}v=0.$ Then, provided that $X$ is a small set, $\left[  DF_{X,\mathrm{y}%
_{0}}A_{1}\right]  \left[  X-\mathrm{x}_{0}\right]  $ should be small. This
means that by incorporating $A_{1}$ in the setup of local coordinates improves
the deficiency of Theorem \ref{thm:Newton-enclosure-simple}.
\end{remark}

\begin{remark}
Once $A_{1}$ is chosen we can choose $A_{2}$ as any matrix of rank $k$ so that
the image of $A_{2}$ is orthogonal to $A_{1}$.
\end{remark}

\begin{remark}
In practice, the candidate for a set $Y$ can be found automatically by
iterating the operator $N$ several times.
\end{remark}

\begin{remark}
In our computer assisted proof, when applied to the Lomel\'{\i} map, we have
found that Theorem \ref{th:directed-Newton} works better than the direct
approach from Theorem \ref{thm:Newton-enclosure-simple}. To give an indication
of the difference between the two: If we were to take the interval enclosure
of $p^{\ast}+A_{1}\mathrm{x}+A_{2}q(\mathrm{x}),$ i.e.%
\[
\tilde{X}\times\tilde{Y}:=\left[  p^{\ast}+A_{1}X+A_{2}Y\right]  ,
\]
consider a mid point $\mathrm{y}_{0}$ of $\tilde{Y},$ and compute
$N(\mathrm{y}_{0},\tilde{X},\tilde{Y})$, then in our computer assisted
validation the diameter of the set $N(\mathrm{y}_{0},\tilde{X},\tilde{Y})$
turns out to be up to thirty times larger than the diameter of $\tilde{Y}$.
Thus, we would not be able to validate (\ref{eq:Newton-direct}). The validation using Theorem \ref{th:directed-Newton} does go through.
%We would need
%to work with much finer subdivisions of the enclosure of the curve to apply
%Theorem \ref{thm:Newton-enclosure-simple}. (This was our first approach, which
%we have abandoned, since in our view it would lead to unreasonable computation times.)

\end{remark}

\begin{corollary}
By differentiating (\ref{eq:zero-with-q}) we see that%
\[
DF\left(  p^{\ast}+A_{1}\mathrm{x}+A_{2}q(\mathrm{x})\right)  \left(
A_{1}+A_{2}Dq\left(  \mathrm{x}\right)  \right)  =0,
\]
hence%
\[
Dq\left(  \mathrm{x}\right)  \in-\left[  DF_{X,Y}A_{2}\right]  ^{-1}\left[
DF_{X,Y}\right]  A_{1}.
\]
This means that we can obtain bounds with computer assistance on the
derivative of $q\left(  \mathrm{x}\right)  $.
\end{corollary}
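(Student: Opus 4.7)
The plan is to obtain the expression for $Dq(\mathrm{x})$ by implicit differentiation of the identity \eqref{eq:zero-with-q}, then translate the resulting pointwise formula into an interval enclosure by replacing each evaluation of $DF$ at the (unknown) point $p^{\ast}+A_{1}\mathrm{x}+A_{2}q(\mathrm{x})$ with the set $[DF_{X,Y}]$ whose definition covers every such evaluation as $\mathrm{x}$ ranges over $X$.

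First, I would invoke Theorem \ref{th:directed-Newton}, which guarantees that $q:X\rightarrow Y$ exists and is as smooth as $F$ (in particular $C^{1}$). Differentiating the identity $F(p^{\ast}+A_{1}\mathrm{x}+A_{2}q(\mathrm{x}))=0$ via the chain rule gives
\[
DF\left(p^{\ast}+A_{1}\mathrm{x}+A_{2}q(\mathrm{x})\right)\,\bigl(A_{1}+A_{2}Dq(\mathrm{x})\bigr)=0,
\]
as asserted. Rearranging, I obtain
\[
\left[DF\left(p^{\ast}+A_{1}\mathrm{x}+A_{2}q(\mathrm{x})\right)A_{2}\right]Dq(\mathrm{x})=-DF\left(p^{\ast}+A_{1}\mathrm{x}+A_{2}q(\mathrm{x})\right)A_{1}.
\]

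Second, I would justify that the matrix on the left is invertible so that the equation can be solved for $Dq(\mathrm{x})$. This follows from the fact that the operator $N(\mathrm{x}_{0},\mathrm{y}_{0},A_{1},A_{2},X,Y)$ was well defined in the statement of Theorem \ref{th:directed-Newton}, which implicitly required that every matrix in $[DF_{X,Y}A_{2}]$ be invertible; this was already exploited in the proof of smoothness of $q$ via the implicit function theorem. In particular the specific matrix $DF(p^{\ast}+A_{1}\mathrm{x}+A_{2}q(\mathrm{x}))A_{2}$, which lies in $[DF_{X,Y}A_{2}]$, is invertible.

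Finally, since $\mathrm{x}\in X$ and $q(\mathrm{x})\in Y$, the point $p^{\ast}+A_{1}\mathrm{x}+A_{2}q(\mathrm{x})$ lies in the set $p^{\ast}+A_{1}X+A_{2}Y$ over which $[DF_{X,Y}]$ is the interval enclosure. Hence $DF(p^{\ast}+A_{1}\mathrm{x}+A_{2}q(\mathrm{x}))\in[DF_{X,Y}]$, and applying the interval arithmetic conventions introduced in the preliminaries yields
\[
Dq(\mathrm{x})\in-\left[DF_{X,Y}A_{2}\right]^{-1}\left[DF_{X,Y}\right]A_{1},
\]
as claimed. There is no real obstacle here; the only subtlety is ensuring invertibility of the interval enclosure of matrices, which has already been verified as a prerequisite for applying Theorem \ref{th:directed-Newton}. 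The resulting inclusion is computable with interval arithmetic, giving the promised rigorous bounds on $Dq(\mathrm{x})$.
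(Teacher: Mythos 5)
Your proposal is correct and follows the same route the paper intends: differentiate the identity from Theorem \ref{th:directed-Newton} by the chain rule, solve for $Dq(\mathrm{x})$ using the invertibility of $\left[DF_{X,Y}A_{2}\right]$ (already required for the Newton operator to be well defined), and then pass to the interval enclosure since $p^{\ast}+A_{1}\mathrm{x}+A_{2}q(\mathrm{x})\in p^{\ast}+A_{1}X+A_{2}Y$. The only difference is that you spell out the rearrangement and the invertibility justification explicitly, whereas the paper leaves them implicit in the corollary statement.
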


We shall now use Theorem \ref{th:directed-Newton} to establish intersections
of stable/unstable manifolds. Recall that $F$ was defined using
(\ref{eq:F-def}). Let us introduce the following notation. For $q(\mathrm{x})$
from Theorem \ref{th:directed-Newton} let $\theta:X\rightarrow\mathbb{R}%
^{u_{1}}$ and $\phi:X\rightarrow\mathbb{R}^{s_{2}}$ be defined as%
\begin{align*}
\theta\left(  \mathrm{x}\right)    & =\pi_{\theta}\left(  p^{\ast}%
+A_{1}\mathrm{x}+A_{2}q(\mathrm{x})\right)  ,\\
\phi\left(  \mathrm{x}\right)    & =\pi_{\phi}\left(  p^{\ast}+A_{1}%
\mathrm{x}+A_{2}q(\mathrm{x})\right)  .
\end{align*}
(This means that $(\theta\left(  \mathrm{x}\right),\phi\left(  \mathrm{x}\right))=p^{\ast}+A_{1}%
\mathrm{x}+A_{2}q(\mathrm{x})$.) Also, let $p:X\rightarrow\mathbb{R}^{k}$ be defined as
\[
p\left(  \mathrm{x}\right)  =f^{l_{1}}(P_{1}(\theta\left(  \mathrm{x}\right)
)).
\]

\begin{theorem}
Assume that $F$ is defined by (\ref{eq:F-def}), and that assumptions of
Theorem \ref{th:directed-Newton} are fulfilled. Then the manifolds
$W^{u}\left(  p_{1}\right)  $ and $W^{s}(p_{2})$ intersect transversally
along $p\left(  \mathrm{x}\right)  $ for $\mathrm{x}\in X$. Moreover,
$p\left(  \mathrm{x}\right)  $ is as smooth as $f$.
\end{theorem}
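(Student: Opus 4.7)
The plan is to obtain this statement as a direct corollary of Theorem \ref{th:directed-Newton}, unpacking what the equation $F = 0$ says in terms of the parameterizations $P_1$, $P_2$ and the dynamics $f$, then reading off transversality from the invertibility built into the Newton operator \eqref{eq:Newton-operator}.

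First I would check that $p(\mathrm{x})$ lies in the intersection. By Theorem \ref{th:directed-Newton} we have a smooth $q\colon X \to Y$ such that
\[
F\bigl(p^{\ast} + A_{1}\mathrm{x} + A_{2} q(\mathrm{x})\bigr)
= f^{l_{1}}\bigl(P_{1}(\theta(\mathrm{x}))\bigr)
- f^{-l_{2}}\bigl(P_{2}(\phi(\mathrm{x}))\bigr) = 0
\]
for every $\mathrm{x} \in X$. Since $P_{1}(\theta(\mathrm{x})) \in W^{u}(p_{1})$ and $W^{u}(p_{1})$ is forward invariant under $f$, it follows that $p(\mathrm{x}) = f^{l_{1}}(P_{1}(\theta(\mathrm{x}))) \in W^{u}(p_{1})$. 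On the other hand the same point equals $f^{-l_{2}}(P_{2}(\phi(\mathrm{x})))$, which lies in $W^{s}(p_{2})$ by backward invariance of $W^{s}(p_{2})$. Hence $p(\mathrm{x}) \in W^{u}(p_{1}) \cap W^{s}(p_{2})$.

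Smoothness is immediate: $\theta$ and $\phi$ are affine functions of $(\mathrm{x}, q(\mathrm{x}))$, $q$ is as smooth as $F$ by Theorem \ref{th:directed-Newton}, and $F$ is built from $f$, $P_{1}$, $P_{2}$ (with the parameterizations analytic by Lemma \ref{lem:Par-lem}). Composing with $f^{l_{1}} \circ P_{1}$ therefore gives a curve $p(\mathrm{x})$ that is at least as smooth as $f$.

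For the transversality claim, the key observation is that
\[
DF(\theta,\phi)
= \bigl[\,D(f^{l_{1}} \circ P_{1})(\theta),\; -D(f^{-l_{2}} \circ P_{2})(\phi)\,\bigr]
\]
is a $k \times m$ matrix whose two column blocks span $T_{p(\mathrm{x})} W^{u}(p_{1})$ and $T_{p(\mathrm{x})} W^{s}(p_{2})$, respectively. The Newton operator $N(\mathrm{x}_0,\mathrm{y}_0,A_1,A_2,X,Y)$ in \eqref{eq:Newton-operator} is only well defined under the hypothesis that every matrix in $[DF_{X,Y} A_{2}]$ is invertible, a fact used in the proof of Theorem \ref{th:directed-Newton}. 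Since $DF \cdot A_{2}$ is a $k \times k$ matrix, its invertibility forces $DF(\theta(\mathrm{x}),\phi(\mathrm{x}))$ to be surjective onto $\mathbb{R}^{k}$. This means exactly that
\[
T_{p(\mathrm{x})} W^{u}(p_{1}) + T_{p(\mathrm{x})} W^{s}(p_{2}) = \mathbb{R}^{k},
\]
which is the definition of transverse intersection. The only subtlety is identifying the columns of $DF \cdot A_2$ as a generator of the sum of tangent spaces rather than of the full image of $DF$, but since $A_2$ has rank $k$ and $\operatorname{im}(A_2) \cap \ker(DF) = \{0\}$ (for otherwise $DF\cdot A_2$ would be singular), the image of $DF$ and of $DF\cdot A_2$ coincide, so no real obstacle arises. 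The conclusion of the theorem then follows.
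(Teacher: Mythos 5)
Your proposal is correct and follows essentially the same route as the paper: identify $p(\mathrm{x})$ as a common point of $W^u(p_1)$ and $W^s(p_2)$ from $F=0$, note smoothness by composition, and derive transversality from the invertibility of $DF\cdot A_2$ implicit in the Newton operator. The only stylistic difference is that the paper writes out $DF\cdot A_2 = C_1\pi_\theta A_2 - C_2\pi_\phi A_2$ explicitly before concluding $T_{p(\mathrm{x})}W^u(p_1)+T_{p(\mathrm{x})}W^s(p_2)=\mathbb{R}^k$, while you argue more abstractly via $\operatorname{im}(DF\cdot A_2)\subseteq\operatorname{im}(DF)$; both arguments are equally valid.
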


\begin{proof}
By Theorem \ref{th:directed-Newton},
\[
0=F\left(  p^{\ast}+A_{1}\mathrm{x}+A_{2}q(\mathrm{x})\right)  =f^{l_{1}%
}(P_{1}(\theta\left(  \mathrm{x}\right)  ))-f^{-l_{2}}\left(  P_{2}%
(\phi\left(  \mathrm{x}\right)  )\right)  ,
\]
meaning that%
\[
p\left(  \mathrm{x}\right)  :=f^{l_{1}}(P_{1}(\theta\left(  \mathrm{x}\right)
))=f^{-l_{2}}\left(  P_{2}(\phi\left(  \mathrm{x}\right)  )\right)  .
\]
Since $P_{1}(\theta\left(  \mathrm{x}\right)  )\in W^{u}(p_{1})$ and
$P_{2}(\phi\left(  \mathrm{x}\right)  )\in W^{s}(p_{2}),$ we see that
$W^{u}\left(  p_{1}\right)  $ and $W^{s}(p_{2})$ intersect at $p\left(
\mathrm{x}\right)  $. 

We now address the issue of transversality. Consider a $k\times u_{1}$ matrix
$C_{1}$ and a $k\times s_{2}$ matrix $C_{2}$ defined as%
\begin{align*}
C_{1} &  =\frac{d}{d\theta}f^{l_{1}}(P_{1}(\theta\left(  \mathrm{x}\right)
)),\\
C_{2} &  =\frac{d}{d\phi}f^{-l_{2}}(P_{2}(\phi\left(  \mathrm{x}\right)  )),
\end{align*}
and observe that%
\begin{align*}
T_{p\left(  \mathrm{x}\right)  }W^{u}\left(  p_{1}\right)   &  =\left\{
C_{1}v:v\in\mathbb{R}^{u_{1}}\right\}  ,\\
T_{p\left(  \mathrm{x}\right)  }W^{s}\left(  p_{2}\right)   &  =\left\{
C_{2}w:w\in\mathbb{R}^{s_{2}}\right\}  .
\end{align*}
To prove that $W^{u}\left(  p_{1}\right)  $ and $W^{s}\left(  p_{2}\right)  $
intersect transversally, we need to show that
\begin{equation}
\left\{  C_{1}v+C_{2}w:v\in\mathbb{R}^{u_{1}},w\in\mathbb{R}^{s_{2}}\right\}
=\mathbb{R}^{k}.\label{eq:trans-span}%
\end{equation}
Since assumptions of Theorem \ref{th:directed-Newton} hold, the matrix
$DF\left(  \theta\left(  \mathrm{x}\right)  ,\phi\left(  \mathrm{x}\right)
\right)  A_{2}$ is invertible. Observing that
\[
DF\left(  \theta\left(  \mathrm{x}\right)  ,\phi\left(  \mathrm{x}\right)
\right)  A_{2}=C_{1}\pi_{\theta}A_{2}-C_{2}\pi_{\phi}A_{2},
\]
invertibility implies that for any $p\in\mathbb{R}^{k}$ there exists a
$\mathrm{y}\in\mathbb{R}^{k}$ such that
\[
C_{1}\pi_{\theta}A_{2}\mathrm{y}-C_{2}\pi_{\phi}A_{2}\mathrm{y}=p.
\]
Above equation implies (\ref{eq:trans-span}). 

The $p\left(  \mathrm{x}\right)  $ is smooth, since it is a composition of
smooth functions. This concludes the proof.
\end{proof}

\begin{figure}[ptb]
\begin{center}
\includegraphics[height=3cm]{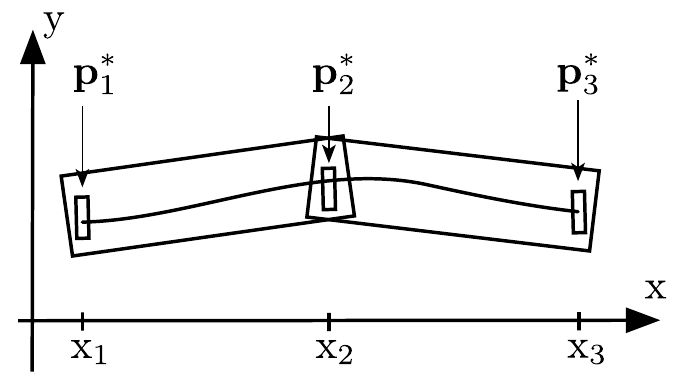}
\end{center}
\caption{The enclosure of a curve from section \ref{sec:curve-bound}. }%
\label{fig:curve-bound}%
\end{figure}

\subsection{Establishing intersections of manifolds along
curves\label{sec:curve-bound}}

We now show how to use the method to establish a bound for a curve along which
$F$ is zero. This example will later be used by us to establish one
dimensional curves along intersections of $W^{u}(p_{1})$ and $W^{s}(p_{2})$ in
the Lomel\'{\i} map.

We will use bold font to denote interval enclosures of sets. This means that
all notations in bold represent interval sets (cubes), and operations
performed on them are in interval arithmetic.

Let $B_{1},B_{2},...,B_{N+1}$ be a sequence of cubes in $\mathbb{R}^{m-1}$,
and let $\mathrm{x}_{1},\mathrm{x}_{2},\ldots,\mathrm{x}_{N+1}\in\mathbb{R}$.
We consider a sequence of sets $\mathbf{p}_{1}^{\ast},\ldots,\mathbf{p}%
_{N+1}^{\ast},$ of the form
\begin{equation}
\mathbf{p}_{n}^{\ast}=\left\{  \mathrm{x}_{n}\right\}  \times B_{n}%
\qquad\text{for }n=1,\ldots,N+1. \label{eq:p-star-form}%
\end{equation}
(See Figure \ref{fig:curve-bound}.) Using Theorem
\ref{thm:Newton-enclosure-simple} (taking $X=\{\mathrm{x}_{n}\}$ and $Y=B_{n}%
$), we can establish that $\mathbf{p}_{n}^{\ast}$ contain zeros of $F$. Our
objective will be to obtain a bound on the curve along which $F$ is zero,
which joins the points in $\mathbf{p}_{1}^{\ast},\ldots,\mathbf{p}_{N+1}%
^{\ast}$.

Let $\mathbf{A}_{1,n}=\mathbf{p}_{n+1}^{\ast}-\mathbf{p}_{n}^{\ast}$. Consider
$X=\left[  0,1\right]  $, consider a sequence of closed $m-1$ dimensional
cubes $Y_{1},...,Y_{N}$ in $\mathbb{R}^{m-1}$ and a sequence of matrixes
$A_{2,1},\ldots,A_{2,N}.$ We can choose these so that the range of $A_{2,n}$
is (roughly) orthogonal to the range of $\mathbf{A}_{1,n}$, for $n=1,...,N$.
Such choice can easily be automated. The choice of $Y_{n}$ can also be
automated, by iterating the Newton operator defined in
(\ref{eq:Newton-operator}).

\begin{remark}
Note that $\mathbf{p}_{n+1}^{\ast},\mathbf{p}_{n}^{\ast}$ are on the curve
which we wish to establish. This means that $\mathbf{A}_{1,n}=\mathbf{p}%
_{n+1}^{\ast}-\mathbf{p}_{n}^{\ast}$ is close to the tangent space of the
curve. This by Remark \ref{rem:A1_choice} means that such $\mathbf{A}_{1,n}$
should be a good choice, meaning that we should have
\[
[DF\left(  \mathbf{p}_{n}^{\ast}\right)] \mathbf{A}_{1,n} \mathrm{x} \approx0,
\qquad\mbox{for } \mathrm{x}\in\left[  0,1\right] .
\]

\end{remark}

Let $\mathrm{x}_{0}\in X,$ $\mathrm{y}_{0,n}\in Y_{n}$ be the mid points of
the sets $X$ and $Y_{n}$, respectively. Assume that for any $A_{1,n}%
\in\mathbf{A}_{1,n}$ and $p_{n}^{\ast}\in\mathbf{p}_{n}^{\ast}$, assumptions
of Theorem \ref{th:directed-Newton} hold for $X,Y_{n},x_{0},\mathrm{y}_{0}%
^{n},A_{1,n},A_{2,n},p_{n}^{\ast}$. If this is true for
$n=1,\ldots,N$, then from Theorem \ref{th:directed-Newton} it follows that
there exists a curve joining the points in $\mathbf{p}_{1}^{\ast},\ldots
,\mathbf{p}_{N+1}^{\ast}$ on which $F$ is zero. The curve is contained in the
set
\[
\bigcup_{n=1}^{N}\left\{  \mathbf{p}_{n}^{\ast}+\mathbf{A}_{1,n}%
\mathrm{x}+A_{2,n}\mathrm{y}:\mathrm{x}\in\left[  0,1\right]  ,\mathrm{y}\in
Y_{n}\right\}  .
\]
Above procedure can be summed up as follows:\smallskip

%\begin{algorithm}
\noindent \hrulefill

\noindent \textbf{Algorithm.}\label{algorithm}\smallskip

\noindent\textbf{Input: }A sequence of points $p_{1},\ldots,p_{N+1}$, for
which $F\left(  p_{n}\right)  \approx0$, for $n=1,\ldots,N+1$. (These points
can be computed non-rigorously.)\smallskip

\noindent\textbf{Output:} A sequence of sets:%
\[
\left\{  \mathbf{p}_{n}^{\ast}+\mathbf{A}_{1,n}\mathrm{x}+A_{2,n}%
\mathrm{y}:\mathrm{x}\in\left[  0,1\right]  ,\mathrm{y}\in Y_{n}\right\}  ,
\]
which enclose the curve on which $F$ is zero.\smallskip

\noindent\textbf{Steps:}

\begin{enumerate}
\item Enclose $p_{1},\ldots,p_{N+1}$ in sets $\mathbf{p}_{1}^{\ast}%
,\ldots,\mathbf{p}_{N+1}^{\ast}$ of the form (\ref{eq:p-star-form}) and
validate existence of zeros of $F$ inside of $\mathbf{p}_{1}^{\ast}%
,\ldots,\mathbf{p}_{N+1}^{\ast}$ using Theorem
\ref{thm:Newton-enclosure-simple} and Remark \ref{rem:zeros-pts}.

\item For $\mathbf{A}_{1,n}=\mathbf{p}_{n+1}^{\ast}-\mathbf{p}_{n}^{\ast}$,
choose $m\times m-1$ matrixes, for which the range of $A_{2,i}$ is (roughly)
orthogonal to $\mathbf{A}_{1,n}$.

\item Take $X=\left[  0,1\right]  ,$ $\mathrm{x}_{0}=\frac{1}{2},$
$\mathrm{y}_{0}=0$ and $Y_{1}=...=Y_{N}=\left\{  0\right\}  .$ Iterate the
Newton operator (\ref{eq:Newton-operator}) several times and enlarge each
$Y_{n}$.

\item For $n=1,...,N$, validate assumptions of Theorem
\ref{th:directed-Newton} for
\[
X,Y_{n},x_{0},\mathrm{y}_{0}^{n},\mathbf{A}_{1,n},A_{2,n},\mathbf{p}_{n}%
^{\ast}.
\]

\end{enumerate}

\noindent \hrulefill
%\end{algorithm}

\subsection{Application to the Lomel\'{\i} map\label{sec:Lom-application}}

In this section we apply the method from section \ref{sec:curve-bound} to the
Lomel\'{\i} map. Here we establish a computer assisted proof of two types of
intersections. The first type is when the stable and unstable manifolds
intersect along closed curves, which is the setting from Figure
\ref{fig:LomeliMap2}. Such intersections are established in section
\ref{sec:loop-CAP}. The second type of intersection is along heteroclinic
arcs, as is the case in Figure \ref{fig:LomeliMap1}. Such arcs are established
in section \ref{sec:arc-CAP}.

\begin{figure}[ptb]
\begin{center}
\includegraphics[height=4.5cm]{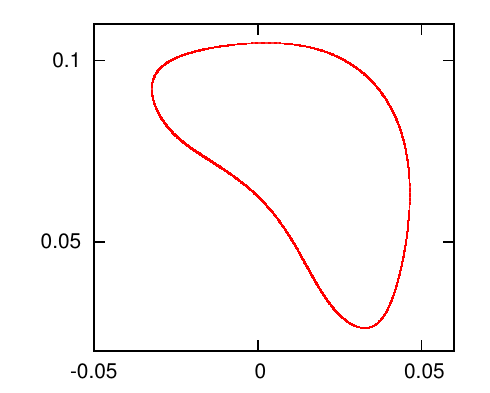}
\includegraphics[height=4.5cm]{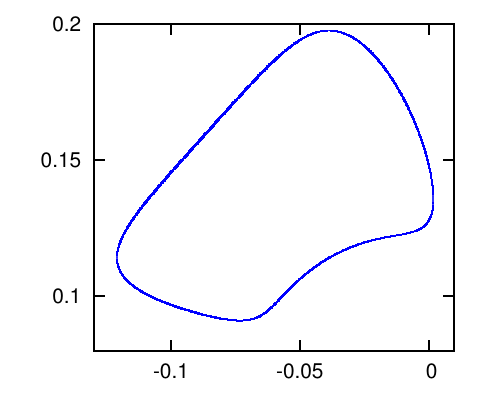}
\end{center}
\caption{The enclosure of a heteroclinic loop in the parameter space. On the
left we have the projection onto $B_{s}$ (in red) and on the right the
projection onto $B_{u}$ (in blue). Here we considered the Lomel\'{\i} map with
$a=\frac{1}{2},$ $b=-\frac{1}{2},$ $c=1,$ $\alpha=-0.08999$, $\tau=\frac
{8}{10}$.}%
\label{fig:loop}%
\end{figure}

\subsubsection{Heteroclinic loops}

\label{sec:loop-CAP}

In this section we consider the Lomel\'{\i} map (\ref{eq:LomeliMap}) with
parameters $a=\frac{1}{2},$ $b=-\frac{1}{2},$ $c=1,$ $\alpha=-0.08999$,
$\tau=\frac{8}{10}$ and give a computer assisted proof of a connection of
$W^{u}\left(  p_{1}\right)  $ with $W^{s}(p_{2})$ along a heteroclinic loop.
Such loop generates, by iterates of $f$, the intersections of $W^{u}(p_{1})$
and $W^{s}(p_{2})$, which are shown in Figure \ref{fig:LomeliMap2}.

We consider
\[
F:B_{u}\times B_{s}\rightarrow\mathbb{R}^{3}%
\]
defined as%
\begin{equation}
F(\theta,\phi)=f^{l_{1}}(P_{1}(\theta))-f^{-l_{2}}\left(  P_{2}(\phi)\right)
, \label{eq:F-Lom-appl}%
\end{equation}
with $l_{1}=l_{2}=9.$

\begin{remark}
We point out that (\ref{eq:F-Lom-appl}) involves many compositions of the map
$f$. Computing such compositions directly in interval arithmetic leads to a
blowup. This is associated with the fact that enclosing each iterate in a rectangular box produced overestimates; the so called \emph{wrapping effect}. (For more information on the wrapping effect see \cite{MR551212}.)
 If we were to naively compose $f$
in interval arithmetic, then the below obtained results would not go through.
For our computation of interval enclosure of $F$ and $DF$ we use a careful,
Lohner-type set representation, that reduces the wrapping effect when
computing bounds on $f^{k}$ and $Df^{k}$. This representation is discussed in
detail in section \ref{sec:wrapping}.
\end{remark}

We follow the procedure from section \ref{sec:curve-bound} to establish the
enclosure of the curve in the parameter space $B_{u}\times B_{s}$. Our curve
is enclosed using $N=1309$ small cubes in $\mathbb{R}^{4}$. Figure
\ref{fig:loop} consists of these cubes, but this is not visible from the plot.
After magnification the cubes start to take shape. In Figure
\ref{fig:loop_closeup} we show a close-up of $50$ of such cubes. (The total number  of considered cubes results from a non-rigorous procedure, which we have used to find initial points that are close to the intersection of the manifolds. They are the input of the Algorithm from page \pageref{algorithm}. The $N=1309$ is arbitrary, and we could have chosen a different number.)

We can propagate the loop in the parameter space using the linear inner dynamics.
This way we obtain the plot from Figure \ref{fig:loop_all}. This figure
corresponds to the intersections visible in Figure \ref{fig:LomeliMap2}. The
difference is that the loops in Figure \ref{fig:loop} are in the parameter
space and the loops from Figure \ref{fig:LomeliMap2} are in the state space. Our bounds on the inner dynamics are rigorous. They follow from the parameterisation method (see (\ref{eq:parmInvEq}) and Remark \ref{rem:unstableParm}). Thus, the resulting plots from Figure \ref{fig:loop_all} are also rigorous enclosures of the curves, as long as we remain within the domains of our parameterisations. The boundary of the domain is depicted in green. Thus, the part of the plot which is within the green boundary is rigorous.

The computation needed to establish the hetoerclinic loop took 59 seconds on a single 3GHz Intel i7 core processor, running on MacBook Pro, with OS X 10.9.5. We have conducted our proof using c++ and the CAPD\footnote{Computer Assisted Proofs in Dynamics: http://capd.ii.uj.edu.pl/} library.  For the 
computational environment used to obtain 
the local stable/unstable parameterizations 
we refer to Remark \ref{rem:parmComps}.

\begin{figure}[ptb]
\begin{center}
\includegraphics[height=4.5cm]{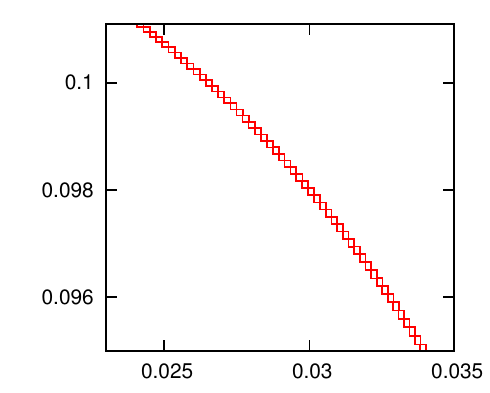}
\includegraphics[height=4.5cm]{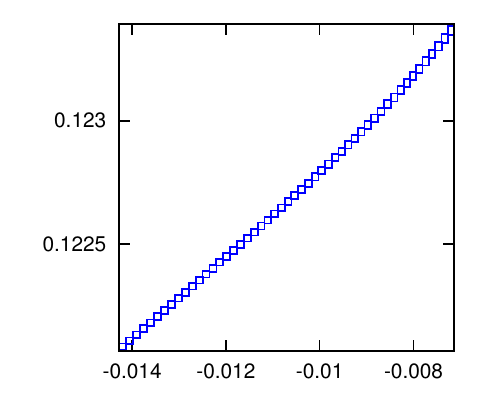}
\end{center}
\caption{The closeup of the enclosure of a heteroclinic loop from Figure
\ref{fig:loop}.}%
\label{fig:loop_closeup}%
\end{figure}

\begin{figure}[ptb]
\begin{center}
\includegraphics[height=4.5cm]{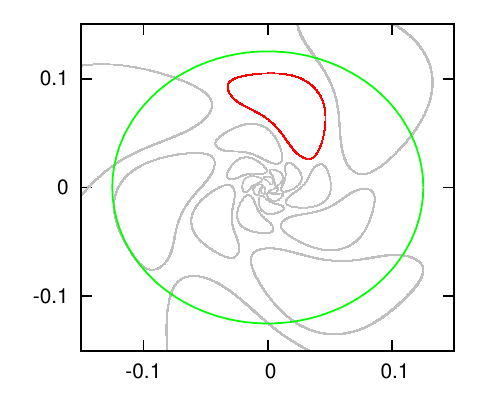}
\includegraphics[height=4.5cm]{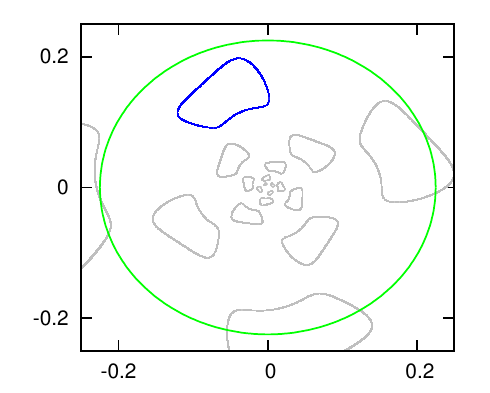}
\end{center}
\caption{The loop from Figure \ref{fig:loop} propagated using the linear inner
dynamics on the parameter space. In green we have the boundaries of the
domains of the parameterisations.}%
\label{fig:loop_all}%
\end{figure}

\subsubsection{Heteroclinic arcs}

\label{sec:arc-CAP}

In this section we consider the Lomel\'{\i} map with parameters $a=\frac
{44}{100},$ $b=\frac{21}{100},$ $c=\frac{35}{100},$ $\alpha=-\frac{1}{4}$,
$\tau=-\frac{3}{10}$ and give a computer assisted proof of a connection of
$W^{u}\left(  p_{1}\right)  $ with $W^{s}(p_{2})$ along two $3$-fold primary
heteroclinic arcs, which lead to six homoclinic paths from $p_{1}$ to $p_{2}$.

We consider $F$ as defined in (\ref{eq:F-Lom-appl}). Applying Theorem
\ref{thm:Newton-enclosure-simple} together with Remark \ref{rem:zeros-pts}, we
establish $N=309$ enclosures of points $\mathbf{p}_{1}^{\ast},\ldots
,\mathbf{p}_{N}^{\ast}$ on which $F$ is zero. (The number of considered points is arbitrary; as long as the validation would go through we could take a different $N$.) The dynamics on the unstable and
stable manifolds is conjugated with a linear map
\begin{align*}
f\circ P_{1}\left(  \theta\right)   &  =P_{1}\left(  A_{1}\theta\right)  ,\\
f\circ P_{2}(\phi)  &  =P_{2}\left(  A_{2}\phi\right)  ,
\end{align*}
where%
\[
A_{i}=\left(
\begin{array}
[c]{ll}%
\mathrm{re}\lambda_{i} & -\mathrm{im}\lambda_{i}\\
\mathrm{im}\lambda_{i} & \mathrm{re}\lambda_{i}%
\end{array}
\right)  \qquad\text{for }i=1,2,
\]
and $\lambda_{1},\lambda_{2}$ are the eigenvalues of $Df\left(  p_{1}\right)
$ and $Df\left(  p_{2}\right)  $, respectively. We have established the
following bounds for the eigenvalues
\begin{align*}
\mathrm{re}\lambda_{1}  &  \in\mathbf{re}\boldsymbol{\lambda}_{1}=\left[
-0.71570025199987,-0.71570025199985\right]  ,\\
\mathrm{im}\lambda_{1}  &  \in\mathbf{im}\boldsymbol{\lambda}_{1}=\left[
-0.93025058966104,-0.93025058966103\right]  ,
\end{align*}%
\begin{align*}
\mathrm{re}\lambda_{2}  &  \in\mathbf{re}\boldsymbol{\lambda}_{2}=\left[
-0.47875667823481,-0.47875667823480\right]  ,\\
\mathrm{im}\lambda_{2}  &  \in\mathbf{im}\boldsymbol{\lambda}_{2}=\left[
-0.70015090953401,-0.70015090953400\right]  .
\end{align*}
We consider interval matrixes
\begin{align*}
\mathbf{A}_{i}  &  =\left(
\begin{array}
[c]{ll}%
\mathbf{re}\boldsymbol{\lambda}_{i} & -\mathbf{im}\boldsymbol{\lambda}_{i}\\
\mathbf{im}\boldsymbol{\lambda}_{i} & \mathbf{re}\boldsymbol{\lambda}_{i}%
\end{array}
\right)  ,\qquad\text{for }i=1,2,\\
\mathbf{B}  &  \mathbf{=}\left(
\begin{array}
[c]{ll}%
\mathbf{A}_{1} & 0\\
0 & \mathbf{A}_{2}%
\end{array}
\right)  ,
\end{align*}
and take
\[
\mathbf{p}_{N+1}^{\ast}=\mathbf{B}^{3}\mathbf{p}_{1}^{\ast}.
\]

Following the method from section \ref{sec:curve-bound}, we establish an
enclosure of a curve (in parameter space) $\gamma=\left(  \gamma_{u}%
,\gamma_{s}\right)  \subset B_{u}\times B_{s}$, which passes through
$\mathbf{p}_{1}^{\ast},\mathbf{p}_{2}^{\ast},\ldots,\mathbf{p}_{N+1}^{\ast}.$
The enclosure is shown in Figure \ref{fig:Arc1}.

\begin{figure}[ptb]
\begin{center}
\includegraphics[height=4.5cm]{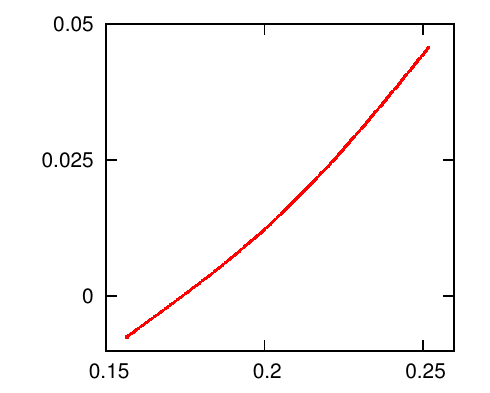}
\includegraphics[height=4.5cm]{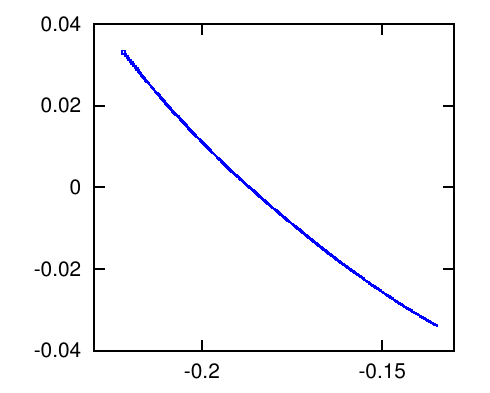}
\end{center}
\caption{Enclosure of a fundamental heteroclinic arc. On the left we have the
projection onto $B_{s}$ (in red) and on the right the projection onto $B_{u}$
(in blue). Here we considered the Lomel\'{\i} map with $a=\frac{44}{100},$
$b=\frac{21}{100},$ $c=\frac{35}{100},$ $\alpha=-\frac{1}{4}$ and $\tau
=-\frac{3}{10}$. }%
\label{fig:Arc1}%
\end{figure}The path $\gamma$ can be iterated by the linear dynamics in the
parameter space. A couple of such iterates result in a picture from Figure
\ref{Fig:Arc1-param}.

\begin{figure}[ptb]
\begin{center}
\includegraphics[height=4.5cm]{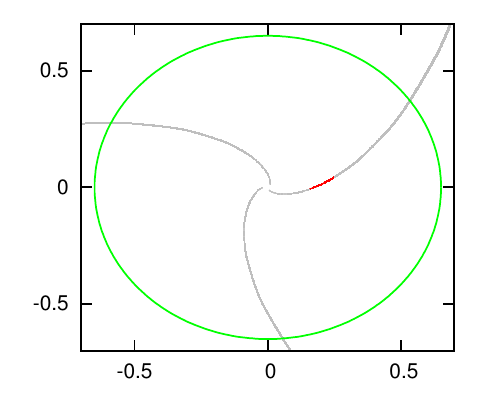}
\includegraphics[height=4.5cm]{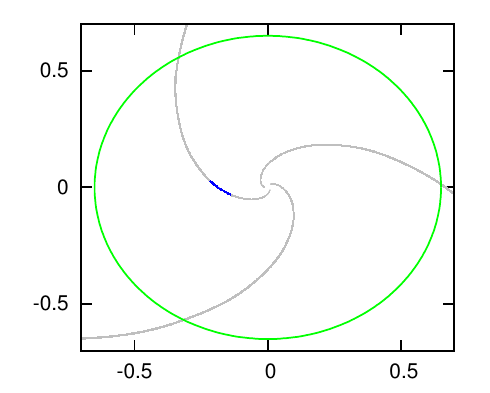}
\end{center}
\caption{Enclosures of heteroclinic paths in parameter space. In blue and red
we have the fundamental heteroclinic arc from Figure \ref{fig:Arc1}. All three
gray homoclinic paths are obtained by iterates of the single fundamental
heteroclinic arc. In green we have the boundaries of the domains of the
parameterisations.}%
\label{Fig:Arc1-param}%
\end{figure}

Since we took $\mathbf{p}_{N+1}^{\ast}=\mathbf{B}^{3}\mathbf{p}_{1}^{\ast},$
the $P_{1}\left(  \gamma_{u}\right)  $ and $P_{2}\left(  \gamma_{s}\right)  $
are $3$-fold fundamental heteroclinic arcs (as discussed in case 2 from
Section \ref{sec:heteroArcs}) and we obtain a heteroclinic path:%
\[
S_{3}=S_{3}\left(  P_{1}\left(  \gamma_{u}\right)  \right)  =\bigcup
_{i\in\mathbb{Z}}f^{3i}\left(  P_{1}\left(  \gamma_{u}\right)  \right)  .
\]

\begin{remark}
Since we know that $F\left(  \gamma\right)  =0,$ by (\ref{eq:F-Lom-appl}),
\[
f^{9}(P_{1}(\gamma_{u}))-f^{-9}\left(  P_{2}(\gamma_{s})\right)  =0,
\]
and thus
\[
S_{3}=\bigcup_{i\in\mathbb{Z}}f^{3i}\left(  P_{1}\left(  \gamma_{u}\right)
\right)  =\bigcup_{i\in\mathbb{Z}}f^{3i}\left(  P_{2}\left(  \gamma
_{s}\right)  \right)  .
\]

\end{remark}

Also $f(S_{3})$ and $f^{2}(S_{3})$ are heteroclinic paths. Thus the $3$-fold
fundamental heteroclinic arc $\gamma$ generates three paths. These three paths
lie on the intersection of the stable and unstable manifolds from Figure
\ref{fig:LomeliMap1}.

For the investigated parameters, one can find a second $3$-fold fundamental
heteroclinic arc that generates a different set of three homoclinic paths. We
have obtained its enclosure using the same procedure, by considering $N=344$
cubes. The obtained enclosure is given in Figure \ref{fig:second-arc}. We can
propagate this arc using the linear inner dynamics on the parameter space (see (\ref{eq:parmInvEq}) and Remark \ref{rem:unstableParm}), and
obtain the plot in Figure \ref{fig:all-arcs}. Since we have rigorous bounds on the inner dynamics, the resulting plots are rigorous, as long as the estimates stay within the domains of the parameterisations.

As a result, we obtain six homoclinic paths in total, which form the
intersections of the stable and unstable manifolds shown in Figure
\ref{fig:LomeliMap1}. The paths from Figure \ref{fig:all-arcs} are in the
parameter space, and the paths in Figure \ref{fig:LomeliMap1} are in the state space.

\begin{figure}[ptb]
\begin{center}
\includegraphics[height=4.5cm]{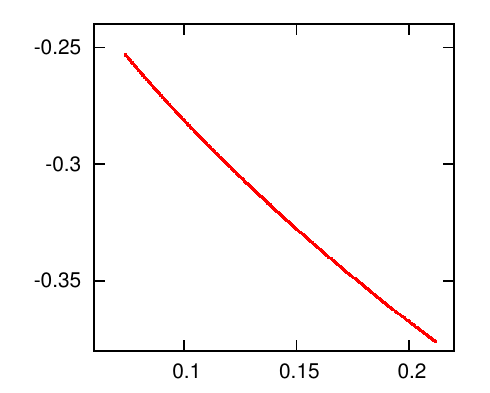}
\includegraphics[height=4.5cm]{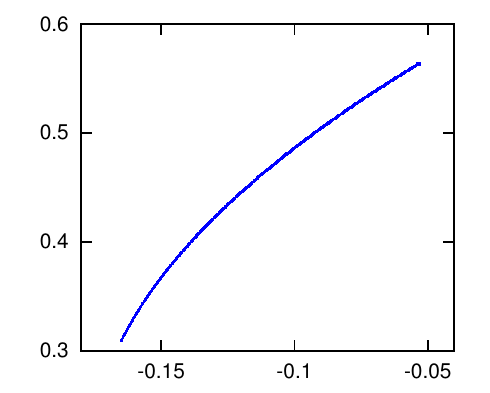}
\end{center}
\caption{Enclosure of the second fundamental heteroclinic arc for the
Lomel\'{\i} map with $a=\frac{44}{100},$ $b=\frac{21}{100},$ $c=\frac{35}%
{100},$ $\alpha=-\frac{1}{4}$ and $\tau=-\frac{3}{10}$. On the left we have
the projection onto $B_{s}$ (in red) and on the right the projection onto
$B_{u}$ (in blue).}%
\label{fig:second-arc}%
\end{figure}

\begin{figure}[ptb]
\begin{center}
\includegraphics[height=4.5cm]{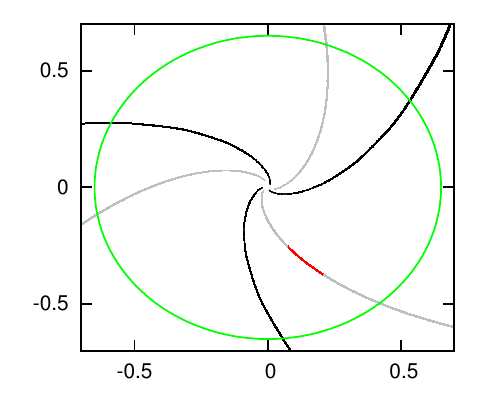}
\includegraphics[height=4.5cm]{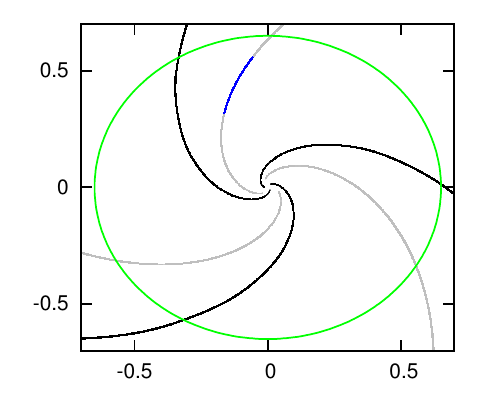}
\end{center}
\caption{In gray, the enclosures of three homoclinic paths generated by the
second fundamental heteroclinic arc. In black, the previous three paths from
Figure \ref{Fig:Arc1-param}. The second fundamental heteroclinic arc from
Figure \ref{fig:second-arc} is in red and blue. In green we have the
boundaries of the domains of the parameterisations.}%
\label{fig:all-arcs}%
\end{figure}

The computation needed to establish the two fundamental heteroclinic arcs took
27 seconds on a single 3GHz Intel i7 core processor, running on MacBook Pro, with OS X 10.9.5. Our proof has been implemented in c++, using the CAPD\footnote{Computer Assisted Proofs in Dynamics: http://capd.ii.uj.edu.pl/} library.
For the 
computational environment used to obtain 
the local stable/unstable parameterizations 
we refer to Remark \ref{rem:parmComps}.

\subsection{Controlling the wrapping effect\label{sec:wrapping}}

In the computer assisted proofs from Sections \ref{sec:loop-CAP},
\ref{sec:arc-CAP} we have established the connections of the manifolds by
investigating $F=0$, where $F$ was given by (\ref{eq:F-Lom-appl}). This $F$
involves many compositions of the Lomel\'{\i} map $f$. To apply Theorem
\ref{th:directed-Newton} we need good estimates on interval enclosure of $F$
and $DF$ computed on sets. If this is computed directly in interval arithmetic
by composing $f$ and $Df$ many times, then the wrapping effect significantly
reduces the accuracy of the computations. In order to overcome the wrapping
effect, one can use multiple shooting or good set representation. In our
approach we have chosen the second approach.

We use a Lohner type representation for images and derivatives by the map $f$.
The approach is in the spirit of \cite{MR1930946}, but simpler since we
consider a map instead of integrating an ODE.

To discuss our set representation we need some auxiliary notations. In this
section we shall use the calligraphic font $\mathcal{A}$ to denote sequences
of matrixes%
\[
\mathcal{A}=\left(  A_{1},\ldots,A_{n}\right)  .
\]
In our setting, $A_{i}$ will be $n\times n$ Hessians matrixes. (For the
Lomel\'{\i} map $n=3$.) For $b\in\mathbb{R}^{n}$ we will write $b^{T}%
\mathcal{A}$ to denote an $n\times n$ matrix and $b^{T}\mathcal{A}b$ to denote
a vector, defined as follows:
\begin{equation}
b^{T}\mathcal{A}=\left(
\begin{array}
[c]{c}%
b^{T}A_{1}\\
\vdots\\
b^{T}A_{n}%
\end{array}
\right)  ,\qquad\text{\qquad}b^{T}\mathcal{A}b=\left(
\begin{array}
[c]{c}%
b^{T}A_{1}b\\
\vdots\\
b^{T}A_{n}b
\end{array}
\right)  . \label{eq:b-prod-hess}%
\end{equation}
(Each $b^{T}A_{i}$ is a $1\times n$ matrix, which constitutes the $i$-th row
of the $n\times n$ matrix $b^{T}\mathcal{A}$.) We will also use a convention
in which for a matrix $B$ we shall write $B\mathcal{A}$ and $\mathcal{A}B$ for
sequences of matrixes defined as follows:
\[
B\mathcal{A=}\left(  BA_{1},\ldots,BA_{n}\right)  \qquad\text{and\qquad
}\mathcal{A}B=\left(  A_{1}B,\ldots,A_{n}B\right)  .
\]
For an $n\times n$ matrix $B=(B_{ki})_{k,i=1}^{n}$ we define a sequence of
matrixes $B\ast\mathcal{A}$ as follows
\[
B\ast\mathcal{A=}\left(  \sum_{i=1}^{n}B_{1i}A_{i},\ldots,\sum_{i=1}^{n}%
B_{ni}A_{i}\right)  .
\]

We now give a technical lemma, which will be useful later on.

\begin{lemma}
\label{lem:Hess-manip}For any $\mathcal{A}$, $B$ and $b,$%
\begin{equation}
B\left(  b^{T}\mathcal{A}\right)  =b^{T}\left(  B\ast\mathcal{A}\right)  .
\label{eq:Hess-manip}%
\end{equation}

\end{lemma}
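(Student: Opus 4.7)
The plan is to prove Lemma~\ref{lem:Hess-manip} by a direct row-by-row comparison of the two sides, using nothing more than the definitions given in \eqref{eq:b-prod-hess} and the displayed definitions of $B\mathcal{A}$ and $B*\mathcal{A}$. Both sides are $n\times n$ matrices, so it suffices to check that their $k$-th rows coincide for every $k\in\{1,\ldots,n\}$.

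First I would unwind the left-hand side. By definition \eqref{eq:b-prod-hess}, $b^{T}\mathcal{A}$ is the matrix whose $i$-th row is the $1\times n$ vector $b^{T}A_{i}$. Left multiplication by the $n\times n$ matrix $B=(B_{ki})$ forms linear combinations of these rows: the $k$-th row of $B(b^{T}\mathcal{A})$ equals
\[
\sum_{i=1}^{n} B_{ki}\,(b^{T}A_{i}).
\]
Next I would unwind the right-hand side. By the definition of $B*\mathcal{A}$, the $k$-th entry of the sequence $B*\mathcal{A}$ is the $n\times n$ matrix $\sum_{i=1}^{n}B_{ki}A_{i}$. Applying the operation in \eqref{eq:b-prod-hess} to this sequence, the $k$-th row of $b^{T}(B*\mathcal{A})$ is
\[
b^{T}\!\left(\sum_{i=1}^{n}B_{ki}A_{i}\right) \;=\; \sum_{i=1}^{n} B_{ki}\,(b^{T}A_{i}),
\]
where I have used linearity of the vector-matrix product in the matrix slot.

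Comparing the two displays shows that the $k$-th rows agree for every $k$, which establishes \eqref{eq:Hess-manip}. There is no real obstacle here; the content of the lemma is purely notational, ensuring that the calligraphic sequence-product $*$ defined on Hessian sequences is compatible with ordinary left multiplication after contracting with $b^{T}$. The one place where a careful reader could slip is in interpreting $B(b^{T}\mathcal{A})$ as an ordinary matrix product (rather than as some sequence-valued operation), so I would state that convention explicitly at the top of the proof before beginning the row-by-row check.
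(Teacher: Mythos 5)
Your proof is correct and takes essentially the same approach as the paper's: a direct unwinding of the definitions. The paper carries out the verification entry-by-entry (computing the $(i,k)$-th coefficient of each side), while you organize the same computation row-by-row and invoke linearity; the content is identical.
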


\begin{proof}
The result follows from direct computation. We write out the proof in the appendix.
\end{proof}

For $h:\mathbb{R}^{n}\rightarrow\mathbb{R}$, let $\nabla h$ stand for the
gradient of $h$ and let $\nabla^{2}h$ stand for the Hessian of $h$.

The Lomel\'{\i} map $f$ is quadratic, which means that
\begin{align*}
f_{i}(x_{0}+b)  &  =f_{i}\left(  x_{0}\right)  +Df_{i}\left(  x_{0}\right)
b+\frac{1}{2}b^{T}\left(  \nabla^{2}f_{i}\right)  b,\\
\nabla f_{i}(x_{0}+b)  &  =\nabla f_{i}\left(  x_{0}\right)  +\left(
\nabla^{2}f_{i}\right)  b.
\end{align*}
If we choose $\mathcal{H}=\left(  \nabla^{2}f_{1},\ldots,\nabla^{2}%
f_{n}\right)  ,$ then using our notations (\ref{eq:b-prod-hess}), we can
rewrite the above as (for the second equality below we use the fact that
$Df_{i}\left(  x\right)  =\left(  \nabla f_{i}(x)\right)  ^{T}$)
\begin{align}
f(x_{0}+b)  &  =f\left(  x_{0}\right)  +Df\left(  x_{0}\right)  b+\frac{1}%
{2}b^{T}\mathcal{H}b,\nonumber\\
Df\left(  x_{0}+b\right)   &  =Df\left(  x_{0}\right)  +b^{T}\mathcal{H}.
\label{eq:Df-using-Hess}%
\end{align}

In our computer assisted consideration, we will represent points as
\[
x_{0}+Ab+r,
\]
and derivatives as%
\[
X_{0}+b^{T}\mathcal{A}+R,
\]
meaning that in interval representation%
\begin{align}
x  &  \in x_{0}+A\mathbf{b}+\mathbf{r,}\label{eq:Lohner-set}\\
X  &  \in X_{0}+\mathbf{b}^{T}\mathcal{A}+\mathbf{R}, \label{eq:Lohner-der}%
\end{align}
where $\mathbf{b}$ and $\mathbf{r}$ are interval vectors and $\mathbf{R}$ is
an interval matrix. Below we show why such representation is a good idea. (Its
main objective is to reduce the wrapping effect.)

We take a point of the form
\begin{equation}
x=x_{0}+u\qquad\text{with }u=Ab+r, \label{eq:x-point}%
\end{equation}
and compute%
\begin{align*}
f_{i}\left(  x\right)  =  &  f_{i}\left(  x_{0}\right)  +Df_{i}\left(
x_{0}\right)  u+\frac{1}{2}u^{T}\left(  \nabla^{2}f_{i}\right)  u\\
=  &  f_{i}\left(  x_{0}\right)  +Df_{i}\left(  x_{0}\right)  Ab
 +Df_{i}\left(  x_{0}\right)  r+\frac{1}{2}\left(  Ab+r\right)  ^{T}\left(
\nabla^{2}f_{i}\right)  \left(  Ab+r\right)  .
\end{align*}
From the above we see that if $b\in\mathbf{b}$ and $r\in\mathbf{r}$, then%
\begin{align*}
f\left(  x\right)  \in &  f\left(  x_{0}\right)  +\left(  Df\left(
x_{0}\right)  A\right)  \mathbf{b}  +Df\left(  x_{0}\right)  \mathbf{r}+\frac{1}{2}\left(  A\mathbf{b}%
+\mathbf{r}\right)  ^{T}\mathcal{H}\left(  A\mathbf{b}+\mathbf{r}\right)  ,
\end{align*}
meaning that
\begin{equation}
f\left(  x\right)  \in\tilde{x}_{0}+\tilde{A}\mathbf{b+\tilde{r}},
\label{eq:f-enclosure-rep}%
\end{equation}
for
\begin{align*}
\tilde{x}_{0}  &  =f\left(  x_{0}\right)  ,\\
\tilde{A}  &  =Df\left(  x_{0}\right)  A,\\
\mathbf{\tilde{r}}  &  =Df\left(  x_{0}\right)  \mathbf{r}+\frac{1}{2}\left(
A\mathbf{b}+\mathbf{r}\right)  ^{T}\mathcal{H}\left(  A\mathbf{b}%
+\mathbf{r}\right)  .
\end{align*}

\begin{remark}
\label{rem:Num-reminder}We note that the second term in $\mathbf{\tilde{r}}$
is $O(\left\Vert A\mathbf{b}+\mathbf{r}\right\Vert ^{2}).$ This means that if
$\mathbf{r}$ is $O(\left\Vert \mathbf{b}\right\Vert ^{2}),$ then
$\mathbf{\tilde{r}}$ will also be $O(\left\Vert \mathbf{b}\right\Vert ^{2})$.
The $\tilde{x}_{0}$ can be computed with good accuracy, since we do not need
to evaluate $f$ on a large set, but just at a single point. Similarly,
$Df\left(  x_{0}\right)  $ can be computed accurately, and so in turn will be
$\tilde{A}$.
\end{remark}

\begin{remark}
If we split up our computation of $\mathbf{\tilde{r}}$ into%
\[
\mathbf{\tilde{r}}=Df\left(  x_{0}\right)  \mathbf{r}+\frac{1}{2}\left(
\mathbf{b}^{T}\left(  A^{T}\mathcal{H}A\right)  \mathbf{b}+\mathbf{b}%
^{T}\left(  A^{T}\mathcal{H}\right)  \mathbf{r}+\mathbf{r}^{T}\left(
\mathcal{H}A\right)  \mathbf{b}+\mathbf{r}^{T}\mathcal{H}\mathbf{r}\right)  ,
\]
then the wrapping effect will be reduced even more.
\end{remark}

We now show how our representation works when computing derivatives. We
consider $x$ of the form (\ref{eq:x-point}) and a matrix $X$ of the form
\[
X=X_{0}+b^{T}\mathcal{A}+R.
\]
Below we will compute $Df(x)X.$ Using (\ref{eq:Df-using-Hess}) for the first
equality below, and Lemma \ref{lem:Hess-manip} for the third, we see that%
\begin{align*}
Df(x)X  &  =\left(  Df\left(  x_{0}\right)  +\left(  \left(  Ab+r\right)
^{T}\mathcal{H}\right)  \right)  X\\
&  =Df\left(  x_{0}\right)  \left[  X_{0}+b^{T}\mathcal{A}+R\right]  +\left(
\left(  Ab+r\right)  ^{T}\mathcal{H}\right)  \left[  X_{0}+b^{T}%
\mathcal{A}+R\right] \\
&  =Df\left(  x_{0}\right)  X_{0}+b^{T}\left(  Df\left(  x_{0}\right)
\ast\mathcal{A}+A^{T}\mathcal{H}X_{0}\right) \\
&  \quad+r^{T}\mathcal{H}X_{0}+Df\left(  x_{0}\right)  R+\left(  \left(
Ab+r\right)  ^{T}\mathcal{H}\right)  \left(  b^{T}\mathcal{A}+R\right)  .
\end{align*}
This means that for $b\in\mathbf{b}$, $R\in\mathbf{R}$ and $r\in\mathbf{r}$%
\begin{equation}
Df(x)X\in\tilde{X}_{0}+\mathbf{b}^{T}\widetilde{\mathcal{A}}+\mathbf{\tilde
{R},} \label{eq:Df-enclosure-rep}%
\end{equation}
for%
\begin{align*}
\tilde{X}_{0}  &  =Df\left(  x_{0}\right)  X_{0},\\
\widetilde{\mathcal{A}}  &  =Df\left(  x_{0}\right)  \ast\mathcal{A}%
+A^{T}\mathcal{H}X_{0},\\
\mathbf{\tilde{R}}  &  =\mathbf{r}^{T}\mathcal{H}X_{0}\mathcal{+}Df\left(
x_{0}\right)  \mathbf{R}+\left(  A\mathbf{b}+\mathbf{r}\right)  ^{T}%
\mathcal{H}\left(  \mathbf{b}^{T}\mathcal{A+}\mathbf{R}\right)  .
\end{align*}

\begin{remark}
\label{rem:Der-reminder}As in Remark \ref{rem:Num-reminder}, we see that
$\tilde{X}_{0}$ and $\widetilde{\mathcal{A}}$ can be computed accurately.
Moreover, if $\mathbf{r}$ and $\mathbf{R}$ are $O(\left\Vert \mathbf{b}%
\right\Vert ^{2})$ then $\mathbf{\tilde{R}}$ will also be $O(\left\Vert
\mathbf{b}\right\Vert ^{2})$.
\end{remark}

The representation (\ref{eq:Lohner-set}--\ref{eq:Lohner-der}) is intended to
control the wrapping effect for computing $f^{k}(x)$ and $Df^{k}(x)$ for
larger $\left\vert k\right\vert $. To compute a bound for $f^{k}(\mathbf{x})$
and $Df^{k}(\mathbf{x})$ in our representation, we start with a set enclosure
and with identity matrix:%
\begin{align}
\mathbf{x}  &  =x_{0}+\mathbf{b,}\label{eq:b-of-init-set}\\
X  &  =Id,\nonumber
\end{align}
meaning that we start with $A=Id,$ $\mathbf{r}=0,$ $X_{0}=Id,$ $\mathbf{R}=0$
and $\mathcal{A}=0$ in the representations of a set (\ref{eq:Lohner-set}) and
derivative (\ref{eq:Lohner-der}). We then iterate the map using
(\ref{eq:f-enclosure-rep}) and (\ref{eq:Df-enclosure-rep}). The resulting set
enclosure will be of the form%
\begin{equation}
f^{k}(\mathbf{x)}\subset x_{k}+A_{k}\mathbf{b}+\mathbf{r}_{k}\mathbf{,}
\label{eq:fk-set}%
\end{equation}
for some vector $x_{k}$, some matrix $A_{k}$, and some interval vector
$\mathbf{r}_{k}$, that follow from our iterative procedure.

\begin{remark}
The $\mathbf{b}$ in (\ref{eq:fk-set}) is the same as in
(\ref{eq:b-of-init-set}), which is the main objective of our representation.
By Remark \ref{rem:Num-reminder}, the $x_{k}$ and $A_{k}$ can be computed
accurately, and we can expect $\mathbf{r}_{k}$ to be small.
\end{remark}

The enclosure of the derivative at the end of our iterative procedure will be
\begin{equation}
Df^{k}(\mathbf{x)}\subset X_{k}+\mathbf{b}^{T}\mathcal{A}_{k}+\mathbf{R}_{k},
\label{eq:Dfk-set}%
\end{equation}
for some matrix $X_{k},$ some sequence of matrixes $\mathcal{A}_{k}$, and some
interval matrix $\mathbf{R}_{k}$, that follow from our iterative procedure.

\begin{remark}
The $\mathbf{b}$ in (\ref{eq:fk-set}) is the same as the one in
(\ref{eq:b-of-init-set}). By Remark \ref{rem:Der-reminder}, we can expect
$\mathbf{R}_{k}$ to be small, and the $X_{k}$, $\mathcal{A}_{k}$ can be
computed accurately.
\end{remark}

Above method of representing sets works quite nicely in our case, since the
Lomel\' i map is quadratic. The method though can also be applied in more
general setting:

\begin{remark}
To apply our method the map $f$ does not need to be quadratic. It is enough to
have interval enclosures $\mathbf{H}_{i}$ of the second derivatives on a set $U$, 
\[
\mathbf{H}_{i} = \left[ \left\{  A\in\mathbb{R}^{n}\times\mathbb{R}^{n}|A_{jk}%
\in\left[  \inf_{x\in U}\frac{\partial^{2}f_{i}}{\partial x_{j}\partial x_{k}%
}\left(  x\right)  ,\sup_{x\in U}\frac{\partial^{2}f_{i}}{\partial
x_{j}\partial x_{k}}\left(  x\right)  \right]  \right\}\right],
\]
for the set $U$ containing $x=x_{0}+Ab+r$. We can then use the $\mathbf{H}%
_{i}$ instead of $\nabla^{2}f_{i}$ in our computations. This gives a method
for the computation of bounds on iterates of maps and on their derivatives,
which reduces the wrapping effect.
\end{remark}

\section{Acknowledgements} We are grateful for useful discussions to J. L. Figueras, W. Tucker and D. Wilczak. We would like to thank the two anonymous reviewers for their suggestions and comments, which helped us to improve the manuscript.

\section{Appendix}

\begin{proof}
(of Lemma \ref{lem:Hess-manip}) We use the fact that%
\[
b^{T}A_{m}=\left(
\begin{array}
[c]{llll}%
\sum_{j}b_{j}A_{mj1} & \sum_{j}b_{j}A_{mj2} & \ldots & \sum_{j}b_{j}A_{mjn}%
\end{array}
\right)  ,
\]
hence the coefficient with index $ik$ in the matrix $b^{T}\mathcal{A}$ has the
form%
\begin{equation}
\left(  b^{T}\mathcal{A}\right)  _{mk}=\sum_{j}b_{j}A_{mjk}.
\label{eq:b-prod-hess-coeff}%
\end{equation}
This allows us to compute the left hand side of (\ref{eq:Hess-manip}) as (we
use (\ref{eq:b-prod-hess-coeff}) in the second equality, in the first we
simply multiply matrixes)
\[
\left(  B\left(  b^{T}\mathcal{A}\right)  \right)  _{ik}=\sum_{m}B_{im}\left(
b^{T}\mathcal{A}\right)  _{mk}=\sum_{m,j}B_{im}b_{j}A_{mjk}.
\]

To compute the right hand side of (\ref{eq:Hess-manip}), we first observe that
the $i$-th matrix in $B\ast\mathcal{A}$ is
\begin{equation}
\left(  B\ast\mathcal{A}\right)  _{i}=\sum_{m}B_{im}A_{m}.
\label{eq:tmp-hess-manip1}%
\end{equation}
We now compute the right hand side of (\ref{eq:Hess-manip}) as, (we use
(\ref{eq:b-prod-hess-coeff}) in the first equality, and
(\ref{eq:tmp-hess-manip1}) in the second equality)%
\begin{multline*}
\left(  b^{T}\left(  B\ast\mathcal{A}\right)  \right)  _{ik}=\sum_{j}%
b_{j}\left(  B\ast\mathcal{A}\right)  _{ijk}=\sum_{j}b_{j}\left(  \sum
_{m}B_{im}A_{m}\right)  _{jk}\\
=\sum_{j}b_{j}\sum_{m}\left(  B_{im}A_{m}\right)  _{jk}=\sum_{j,m}b_{j}%
B_{im}A_{mjk}.
\end{multline*}
Both sides of (\ref{eq:Hess-manip}) are the same, which finishes our proof.
\end{proof}

\bibliographystyle{unsrt}
\bibliography{papers}

\begin{thebibliography}{10}

\bibitem{MR0228014}
S.~Smale.
\newblock Differentiable dynamical systems.
\newblock {\em Bull. Amer. Math. Soc.}, 73:747--817, 1967.

\bibitem{2015Chaos..25i7602M}
J.~D. {Meiss}.
\newblock {Thirty years of turnstiles and transport}.
\newblock {\em Chaos}, 25(9):097602, September 2015.

\bibitem{MR1976079}
X.~Cabr{\'e}, E.~Fontich, and R.~de~la Llave.
\newblock The parameterization method for invariant manifolds. {I}. {M}anifolds
  associated to non-resonant subspaces.
\newblock {\em Indiana Univ. Math. J.}, 52(2):283--328, 2003.

\bibitem{MR1976080}
X.~Cabr{\'e}, E.~Fontich, and R.~de~la Llave.
\newblock The parameterization method for invariant manifolds. {II}.
  {R}egularity with respect to parameters.
\newblock {\em Indiana Univ. Math. J.}, 52(2):329--360, 2003.

\bibitem{MR2177465}
X.~Cabr{\'e}, E.~Fontich, and R.~de~la Llave.
\newblock The parameterization method for invariant manifolds. {III}.
  {O}verview and applications.
\newblock {\em J. Differential Equations}, 218(2):444--515, 2005.

\bibitem{MR2240743}
{\`A}.~Haro and R.~de~la Llave.
\newblock A parameterization method for the computation of invariant tori and
  their whiskers in quasi-periodic maps: numerical algorithms.
\newblock {\em Discrete Contin. Dyn. Syst. Ser. B}, 6(6):1261--1300
  (electronic), 2006.

\bibitem{MR2289544}
A.~Haro and R.~de~la Llave.
\newblock A parameterization method for the computation of invariant tori and
  their whiskers in quasi-periodic maps: rigorous results.
\newblock {\em J. Differential Equations}, 228(2):530--579, 2006.

\bibitem{MR2679365}
Gianni Arioli and Hans Koch.
\newblock Computer-assisted methods for the study of stationary solutions in
  dissipative systems, applied to the {K}uramoto-{S}ivashinski equation.
\newblock {\em Arch. Ration. Mech. Anal.}, 197(3):1033--1051, 2010.

\bibitem{MR2539195}
Sarah Day, William~D. Kalies, and Thomas Wanner.
\newblock Verified homology computations for nodal domains.
\newblock {\em Multiscale Model. Simul.}, 7(4):1695--1726, 2009.

\bibitem{MR2338393}
Sarah Day, Jean-Philippe Lessard, and Konstantin Mischaikow.
\newblock Validated continuation for equilibria of {PDE}s.
\newblock {\em SIAM J. Numer. Anal.}, 45(4):1398--1424 (electronic), 2007.

\bibitem{galepu}
Marcio Gameiro, Jean-Philippe Lessard, and Alessandro Pugliese.
\newblock Computation of smooth manifolds of solutions of {PDE}s via rigorous
  multi-parameter continuation.
\newblock {\em Found. Comput. Math.}, 16(2):531--575, April 2016.

\bibitem{MR2326238}
Konstantin Mischaikow and Thomas Wanner.
\newblock Probabilistic validation of homology computations for nodal domains.
\newblock {\em Ann. Appl. Probab.}, 17(3):980--1018, 2007.

\bibitem{MR2630003}
Jan~Bouwe van~den Berg, Jean-Philippe Lessard, and Konstantin Mischaikow.
\newblock Global smooth solution curves using rigorous branch following.
\newblock {\em Math. Comp.}, 79(271):1565--1584, 2010.

\bibitem{MR2728178}
J.~D. Mireles~James and Hector Lomel{\'{\i}}.
\newblock Computation of heteroclinic arcs with application to the volume
  preserving {H}\'enon family.
\newblock {\em SIAM J. Appl. Dyn. Syst.}, 9(3):919--953, 2010.

\bibitem{MR3079670}
J.~D. Mireles~James.
\newblock Quadratic volume-preserving maps: (un)stable manifolds, hyperbolic
  dynamics, and vortex-bubble bifurcations.
\newblock {\em J. Nonlinear Sci.}, 23(4):585--615, 2013.

\bibitem{ourHeteroArcCodes}
Maciej Capi{\'n}ski and J.~D. Mireles~James.
\newblock Web page for the paper \textit{validated computation of heteroclinic
  arcs}.
\newblock {\em
  \url{http://cosweb1.fau.edu/~jmirelesjames/validatedArcsPage.html}}.

\bibitem{MR1326374}
Anatole Katok and Boris Hasselblatt.
\newblock {\em Introduction to the modern theory of dynamical systems},
  volume~54 of {\em Encyclopedia of Mathematics and its Applications}.
\newblock Cambridge University Press, Cambridge, 1995.
\newblock With a supplementary chapter by Katok and Leonardo Mendoza.

\bibitem{MR1396532}
Clark Robinson.
\newblock {\em Dynamical systems}.
\newblock Studies in Advanced Mathematics. CRC Press, Boca Raton, FL, 1995.
\newblock Stability, symbolic dynamics, and chaos.

\bibitem{MR2394421}
P.~J. Archer, T.~G. Thomas, and G.~N. Coleman.
\newblock Direct numerical simulation of vortex ring evolution from the laminar
  to the early turbulent regime.
\newblock {\em J. Fluid Mech.}, 598:201--226, 2008.

\bibitem{MR1285950}
R.~S. MacKay.
\newblock Transport in {$3$}{D} volume-preserving flows.
\newblock {\em J. Nonlinear Sci.}, 4(4):329--354, 1994.

\bibitem{MR1856972}
Fotis Sotiropoulos, Yiannis Ventikos, and Tahirih~C. Lackey.
\newblock Chaotic advection in three-dimensional stationary vortex-breakdown
  bubbles: Silnikov's chaos and the devil's staircase.
\newblock {\em J. Fluid Mech.}, 444:257--297, 2001.

\bibitem{MR2481277}
H.~R. Dullin and J.~D. Meiss.
\newblock Quadratic volume-preserving maps: invariant circles and bifurcations.
\newblock {\em SIAM J. Appl. Dyn. Syst.}, 8(1):76--128, 2009.

\bibitem{MR1704974}
K.~E. Lenz, H.~E. Lomel{\'{\i}}, and J.~D. Meiss.
\newblock Quadratic volume preserving maps: an extension of a result of
  {M}oser.
\newblock {\em Regul. Chaotic Dyn.}, 3(3):122--131, 1998.
\newblock J. Moser at 70 (Russian).

\bibitem{MR1617998}
H{\'e}ctor~E. Lomel{\'{\i}} and James~D. Meiss.
\newblock Quadratic volume-preserving maps.
\newblock {\em Nonlinearity}, 11(3):557--574, 1998.

\bibitem{MR3068557}
J.~D. Mireles~James and Konstantin Mischaikow.
\newblock Rigorous a posteriori computation of (un)stable manifolds and
  connecting orbits for analytic maps.
\newblock {\em SIAM J. Appl. Dyn. Syst.}, 12(2):957--1006, 2013.

\bibitem{maxManifolds}
Maxime Breden, Jean-Philippe Lessard, and Jason~D. Mireles~James.
\newblock Computation of maximal local (un)stable manifold patches by the
  parameterization method.
\newblock {\em Indag. Math. (N.S.)}, 27(1):340--367, 2016.

\bibitem{parmChristian}
J.~B. Van~den Berg, J.~D. Mireles~James, and Christian Reinhardt.
\newblock A parameterized newton-kantorivich method for rigorously computing
  (un)stable manifolds: non-resonant and resonant spectra.
\newblock {\em (In preperation)}.

\bibitem{intlabCitation}
{S.M.} Rump.
\newblock {\em {INTLAB - INTerval LABoratory}}.
\newblock Kluwer Academic Publishers, Dordrecht, 1999.

\bibitem{MR2652784}
Siegfried~M. Rump.
\newblock Verification methods: rigorous results using floating-point
  arithmetic.
\newblock {\em Acta Numer.}, 19:287--449, 2010.

\bibitem{MR551212}
Ramon~E. Moore.
\newblock {\em Methods and applications of interval analysis}, volume~2 of {\em
  SIAM Studies in Applied Mathematics}.
\newblock Society for Industrial and Applied Mathematics (SIAM), Philadelphia,
  Pa., 1979.

\bibitem{MR1930946}
Piotr Zgliczynski.
\newblock {$C^1$} {L}ohner algorithm.
\newblock {\em Found. Comput. Math.}, 2(4):429--465, 2002.

\end{thebibliography}
\end{document}